\theoremstyle{definition}
\newtheorem{thm}{Theorem}[section]
\newtheorem{prop}[thm]{Proposition}
\newtheorem{conj}[thm]{Conjecture}
\newtheorem{lemma}[thm]{Lemma}
\newtheorem{rem}[thm]{Remark}
\DeclareMathOperator{\des}{des}
\DeclareMathOperator{\asc}{asc}
\DeclareMathOperator{\inv}{inv}
\DeclareMathOperator{\val}{val}
\DeclareMathOperator{\sgn}{sgn}
\DeclareMathOperator{\id}{id}
\newcommand{\el}[2]{\genfrac{\langle}{\rangle}{0pt}{}{#1}{#2}}
\newcommand{\RR}{\mathbb{R}}
\newcommand{\pmE}{$\pm$-Eulerian}
\title{On the joint distribution of descents and signs of permutations}
\author[Fulman]{Jason Fulman}%
\address{Department of Mathematics, USC, Los Angeles, CA}
\email{fulman@math.usc.edu}
\author[Kim]{Gene B. Kim}%
\address{Department of Mathematics, Stanford University, Stanford, CA}
\email{genebkim@stanford.edu}
\author[Lee]{Sangchul Lee}%
\address{Department of Mathematics, UCLA, Los Angeles, CA}
\email{sangchul87.lee@gmail.com}
\author[Petersen]{T. Kyle Petersen}%
\address{Department of Mathematical Sciences, DePaul University, Chicago, IL}
\email{tpeter21@depaul.edu}
\date{January 30, 2021}
\begin{document}

\maketitle

\begin{abstract}
We study the joint distribution of descents and sign for elements of the symmetric group and the hyperoctahedral group (Coxeter groups of types $A$ and $B$). For both groups, this has an application to riffle shuffling: for large decks of cards the sign is close to random after a single shuffle. In both groups, we derive generating functions for the Eulerian distribution refined according to sign, and use them to give two proofs of central limit theorems for positive and negative Eulerian numbers.
\end{abstract}

\section{Introduction}

The distribution of descents over all permutations is known as the \emph{Eulerian distribution}, and the number of permutations of $n$ elements with a given number of descents is known as an \emph{Eulerian number}. The Eulerian numbers are ubiquitous in combinatorics; see \cite{Pet} for an entire book devoted to Eulerian numbers, as well as various refinements and generalizations of them.

In this paper we study one such refinement, namely the joint distribution of descents and sign, which has been studied, e.g., by Tanimoto \cite{Tan}, and more recently by Dey and Sivasubramanian \cite{DS}. Our work is also in some sense an extension of work of Loday \cite{Lo} and Desarm\'enien and Foata \cite{DF}, who studied ``signed'' Eulerian numbers. This distribution is also briefly mentioned as an example in Section 9.3 of the sweeping work by Hwang, Chern, and Duh \cite{HCD} who give a unified approach to central limit theorems for distributions satisfying Eulerian-type recurrences.

Permutations of the set $[n] = \{1,2,\ldots,n\}$ provide one combinatorial description for the elements of the symmetric group $S_n$. In Section \ref{sec:Bn} we also consider the analogous joint distributions of descents and sign over elements of the hyperoctahedral group $B_n$ (and more generally, one can replace sign by a one-dimensional character of $B_n$). The symmetric and hyperoctahedral groups are Coxeter groups of types $A_{n-1}$ and $B_n$, and the distribution of descents is well-understood in any finite Coxeter group; see, e.g., the recent paper of Kahle and Stump \cite{KS} for the current state of knowledge. However, to our knowledge the limiting distribution of descents and sign has not been investigated outside of the symmetric group, and this paper is the first to make connections between such distributions and card shuffling.

\subsection{Basic definitions}

We now provide some definitions in the symmetric group case. Definitions for the hyperoctahedral group are given in Section \ref{sec:Bn}. First, we define a permutation $w$ to be a bijection $[n]\to [n]$, which we write in one-line notation: $w=w(1)w(2)\cdots w(n)$. We let $S_n$ denote the set of all permutations of $[n]$. A \emph{descent} of a permutation is a position $i$ such that $w(i)>w(i+1)$. We let $\des(w)$ denote the number of descents of $w$, i.e., for $w \in S_n$,
\[
 \des(w) = |\{ 1\leq i \leq n-1 : w(i) > w(i+1) \}|.
\]
Similarly, an \emph{ascent} of a permutation is a position $i$ such that $w(i)<w(i+1)$, and the number of ascents is
\[
 \asc(w) = |\{ 1\leq i \leq n-1 : w(i) < w(i+1) \}|.
\]
If a permutation has $k$ descents, then it has $n-1-k$ ascents, while its reversal, $\overleftarrow{w}=w(n)w(n-1)\cdots w(1)$ has $k$ ascents. Thus, the permutations with $k$ descents are in bijection with the permutations having $k$ ascents. Note also that if $w$ has $k$ descents, the descent positions partition $w$ into $k+1$ maximally increasing runs. For example, the permutation $w=4|3|126|5$ has $\des(w)=3$, and the descent positions (indicated with vertical bars) partition $w$ into $4$ increasing runs.

The Eulerian numbers are denoted $\el{n}{k}$, $1\leq k \leq n$, and they count the number of permutations with $k$ increasing runs, i.e., with $k-1$ descents. That is,
\[
 \el{n}{k} = |\{ w \in S_n : \des(w) = k-1\}|.
\]
The first few rows of the Eulerian triangle are shown in Table \ref{tab:eul}.

The \emph{Eulerian polynomials} are the generating functions for the rows of this triangle, i.e., the generating function for permutations according to the descent statistic:
\[
 A_n(t) = \sum_{k=1}^n \el{n}{k} t^k =  \sum_{ w \in S_n} t^{\des(w)+1}.
\]
For example, we have $A_3(t) = t+4t^2 + t^3$ and $A_4(t) = t+11t^2 + 11t^3 + t^4$.

\begin{table}[t]
\begin{center}
\begin{tabular}{c |rrrrrrrr}
$n\backslash k$ & 1 & 2 & 3 & 4 &  5 & 6 & 7\\
\hline
1 & 1  \\
2 & 1 & 1 \\
3 & 1 & 4 & 1 \\
4 & 1 & 11 & 11 & 1 \\
5 & 1 & 26 & 66 & 26 & 1 \\
6 & 1 & 57 & 302 & 302 & 57 & 1  \\
7 & 1 & 120 & 1191 & 2416 & 1191 & 120 & 1\\
\end{tabular}
\end{center}
\caption{Triangle of the Eulerian numbers $\el{n}{k}$, the number of permutations in $S_n$ with $k-1$ descents.}\label{tab:eul}
\end{table}

The \emph{sign} of a permutation is $1$ if it can be written as a product of an even number of transpositions; otherwise the sign is $-1$. The sign of a permutation is well-defined, and denoted by $\sgn(w)$. It is also related to other statistics for permutations, e.g.,
\[
\sgn(w) = (-1)^{n-c(w)} = (-1)^{\inv(w)},
\]
where $c(w)$ is the number of cycles of $w$, and $\inv(w)$ is the number of inversions of $w$. Let $S_n^+$ denote the set of permutations of positive sign (also known as ``even'' permutations), and let $S_n^- = S_n - S_n^+$ denote the set of permutations with negative sign, i.e.,
\[
 S_n^+ = \{ w \in S_n : \sgn(w) = 1\} \quad \mbox{and} \quad S_n^- = \{ w \in S_n : \sgn(w) = -1\}.
\]

We now define the \emph{positive Eulerian number}, denoted $\el{n}{k}^+$, to be the number of permutations in $S_n^+$ with $k-1$ descents and define the \emph{negative Eulerian number}, denoted $\el{n}{k}^-$, to be the number of permutations in $S_n^-$ with $k-1$ descents, i.e.,
\[
 \el{n}{k}^+ = |\{w \in S_n^+ : \des(w) = k-1\}| \quad \mbox{and} \quad \el{n}{k}^- = |\{w \in S_n^- : \des(w) = k-1\}|.
\]
The first few rows of the positive and negative Eulerian numbers are shown in Tables \ref{tab:poseul} and \ref{tab:negeul}.

\begin{table}[t]
\begin{center}
\begin{tabular}{c |rrrrrrrr}
$n\backslash k$ & 1 & 2 & 3 & 4 &  5 & 6 & 7\\
\hline
1 & 1  \\
2 & 1 & 0 \\
3 & 1 & 2 & 0 \\
4 & 1 & 5 & 5 & 1 \\
5 & 1 & 14 & 30 & 14 & 1 \\
6 & 1 & 29 & 147 & 155 & 28 & 0  \\
7 & 1 & 64 & 586 & 1208 & 605 & 56 & 0\\
\end{tabular}
\end{center}
\caption{Triangle of the positive Eulerian numbers, $\el{n}{k}^+$, the number of permutations in $S_n^+$ with $k-1$ descents.}\label{tab:poseul}
\end{table}

\begin{table}[t]
\begin{center}
\begin{tabular}{c |rrrrrrrr}
$n\backslash k$ & 1 & 2 & 3 & 4 &  5 & 6 & 7\\
\hline
1 & 0  \\
2 & 0 & 1 \\
3 & 0 & 2 & 1 \\
4 & 0 & 6 & 6 & 0 \\
5 & 0 & 12 & 36 & 12 & 0 \\
6 & 0 & 28 & 155 & 147 & 29 & 1  \\
7 & 0 & 56 & 605 & 1208 & 586 & 64 & 1\\
\end{tabular}
\end{center}
\caption{Triangle of the negative Eulerian numbers, $\el{n}{k}^-$, the number of permutations in $S_n^-$ with $k-1$ descents.}\label{tab:negeul}
\end{table}

We define the \emph{positive Eulerian polynomial}, $A_n^+(t)$, and the \emph{negative Eulerian polynomial}, $A_n^-(t)$, to be the generating functions for the positive and negative Eulerian numbers, respectively. That is,
\[
 A_n^+(t) = \sum_{k=1}^n \el{n}{k}^+ t^k =  \sum_{ w \in S_n^+} t^{\des(w)+1},
\]
and
\[
A_n^-(t) = \sum_{k=1}^n \el{n}{k}^- t^k =  \sum_{ w \in S_n^-} t^{\des(w)+1}.
\]
For example, $A_3^+(t) = t+2t^2$, $A_3^-(t) = 2t^2+t^3$, $A_4^+(t) = t+5t^2+5t^3+t^4$, and $A_4^-(t) = 6t^2+6t^3$. Throughout the paper, when referring generically to both the positive Eulerian numbers and the negative Eulerian numbers, or to their corresponding polynomial generating functions, we will write ``\pmE{} numbers'' or ``\pmE{} polynomials'' and use the notation $\el{n}{k}^{\pm}$ and $A_n^{\pm}(t)$.

\subsection{Main results for the symmetric groups}

This paper contains three theorems and a conjecture, and each theorem is proved in more than one way. The conjecture, while interesting in its own right, would provide a third proof of one of the theorems. In Section \ref{sec:Bn} we have analogous results for the hyperoctahedral group.

Among the many identities for Eulerian numbers is the following power series identity:
\begin{equation}\label{eq:seriesA}
 \frac{A_n(t)}{(1-t)^{n+1}} = \sum_{k\geq 0} k^n t^k.
\end{equation}
Our first main result for the \pmE{} numbers is a similar identity, given in the following theorem.

\begin{thm}[Generating function identity]\label{thm:main}
 For all $n \geq 1$,
 \begin{equation}\label{eq:seriesApm}
 \frac{A_n^{\pm}(t)}{(1-t)^{n+1}} = \sum_{k\geq 0} \frac{k^n \pm k^{\lceil n/2 \rceil}}{2} t^k.
 \end{equation}
\end{thm}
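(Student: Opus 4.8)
The plan is to split off the ``signed'' part of \eqref{eq:seriesApm} and prove it combinatorially. Put
\[
 D_n(t) = \sum_{w \in S_n} \sgn(w)\, t^{\des(w)+1} = A_n^+(t) - A_n^-(t),
\]
so that $A_n^{\pm}(t) = \tfrac12\left(A_n(t) \pm D_n(t)\right)$. Dividing by $(1-t)^{n+1}$ and using \eqref{eq:seriesA}, we see that, granting \eqref{eq:seriesA}, Theorem~\ref{thm:main} is equivalent to the single identity $D_n(t)/(1-t)^{n+1} = \sum_{k\ge 0} k^{\lceil n/2\rceil} t^k$; equivalently (applying \eqref{eq:seriesA} a second time on the right) $D_n(t) = (1-t)^{\lfloor n/2\rfloor} A_{\lceil n/2\rceil}(t)$. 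This is the formula for the signed Eulerian polynomial of Loday \cite{Lo} and of Desarm\'enien--Foata \cite{DF}, which one could simply quote; instead I would give a self-contained proof via the theory of $P$-partitions and a sign-reversing involution.

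First I would recall the usual bijective proof of \eqref{eq:seriesA}: for $k \ge 1$, the coefficient of $t^k$ in $t^{\des(w)+1}/(1-t)^{n+1}$ counts the weakly increasing maps $g\colon [n] \to \{1,\dots,k\}$ with $g(i) < g(i+1)$ for every $i \in \Des(w)$; summing over $w \in S_n$ and comparing with \eqref{eq:seriesA} shows that these pairs $(w,g)$ are equinumerous with the $k^n$ functions $f\colon [n] \to \{1,\dots,k\}$, a bijection being $f \mapsto (\sigma(f),\, f\circ\sigma(f))$ where $\sigma(f) \in S_n$ is the unique permutation that stably sorts the values of $f$ (ties broken in favor of the smaller position). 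Performing exactly the same expansion on $D_n(t)$ and carrying the factor $\sgn(w)$ along, the coefficient of $t^k$ in $D_n(t)/(1-t)^{n+1}$ equals $\sum_{f\colon [n]\to\{1,\dots,k\}} \sgn(\sigma(f))$. Hence everything reduces to proving that, for every $k \ge 1$,
\[
 \sum_{f\colon [n]\to\{1,\dots,k\}} \sgn(\sigma(f)) = k^{\lceil n/2\rceil}.
\]

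For this I would use the elementary fact $\inv(\sigma(f)) = |\{\, x < y : f(x) > f(y)\,\}|$ together with a sign-reversing involution $\phi$ on functions $f\colon [n]\to\{1,\dots,k\}$. Group the positions of $[n]$ into the consecutive blocks $\{1,2\}, \{3,4\}, \dots$, with $\{n\}$ a singleton block when $n$ is odd; given $f$, let $j_0$ be the least index with $f(2j_0-1) \ne f(2j_0)$, and let $\phi(f)$ be $f$ with the values at positions $2j_0-1$ and $2j_0$ interchanged (with $\phi(f) = f$ when no such $j_0$ exists). The fixed points of $\phi$ are precisely the functions that are constant on each two-element block, and there are $k^{\lceil n/2\rceil}$ of them (choose the common value on each block, and $f(n)$ as well when $n$ is odd). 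It then remains to check two things: (a) if $f$ is a fixed point then $\inv(\sigma(f))$ is even, since it is a sum of terms $|B_i|\,|B_j|$, one for each pair of blocks, each of which is $0$, $2$, or $4$; and (b) if $f$ is not a fixed point then $\sgn(\sigma(\phi(f))) = -\sgn(\sigma(f))$, because the swap changes the indicator $[\,f(x) > f(y)\,]$ only for the single pair $(x,y) = (2j_0-1,\, 2j_0)$ --- for any other position $r$ the two terms of $|\{x<y : f(x)>f(y)\}|$ involving $r$ are merely exchanged by the swap --- so $\inv$ changes by exactly $\pm 1$. Summing, the displayed identity follows, and with it Theorem~\ref{thm:main}.

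The step I expect to be the main obstacle is the sign bookkeeping in (b): one must verify carefully that, apart from the single pair $\{2j_0-1,\, 2j_0\}$ itself, every other position contributes the same amount to $\inv(\sigma(f))$ and to $\inv(\sigma(\phi(f)))$ --- here it is essential that the two swapped positions are adjacent, so that no position lies strictly between them --- and in (a) one should confirm that a stable sort never interleaves two blocks of equal value, so that the inversions of $\sigma(f)$ are exactly the cross-block ones counted there. Everything else is routine manipulation of rational generating functions. Finally, two shortcuts are worth noting: one could instead quote the Loday / Desarm\'enien--Foata formula for $D_n(t)$ and perform only the brief reduction of the first paragraph; or one could match a recurrence for $A_n^{\pm}(t)$ against a recurrence for the right-hand side of \eqref{eq:seriesApm}, although that approach looks more cumbersome.
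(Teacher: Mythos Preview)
Your proposal is correct. The reduction in your first paragraph---writing $A_n^{\pm}(t)=\tfrac12(A_n(t)\pm D_n(t))$ and reducing the theorem to $D_n(t)=(1-t)^{\lfloor n/2\rfloor}A_{\lceil n/2\rceil}(t)$---is exactly the paper's second proof, which then simply quotes this identity from Desarm\'enien--Foata \cite{DF} (with a pointer to Wachs' involution \cite{W}). What you add beyond the paper is a self-contained proof of that identity: you pass through the $P$-partition bijection to rewrite the coefficient of $t^k$ in $D_n(t)/(1-t)^{n+1}$ as $\sum_f \sgn(\sigma(f))$, then cancel with the ``swap the first unequal adjacent pair'' involution on words $f\colon[n]\to[k]$. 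The bookkeeping in (a) and (b) is fine: for (b), adjacency of $2j_0-1,2j_0$ is precisely what guarantees every other position $r$ contributes identically before and after the swap, so the inversion count changes by exactly $\pm1$; for (a), the block-level inversion count gives only terms $4$ or $2$, all even. The paper's \emph{first} proof is genuinely different from yours: it uses the cycle-index identity \eqref{eq:F1} and the necklace product of Lemma~\ref{lem:usesim} to evaluate $\sum_n u^n D_n(t)/(1-t)^{n+1}$ in closed form. That route packages the same content algebraically and ties in with the shuffling identity \eqref{eq:DMP}, whereas your combinatorial argument is more elementary and makes the parity structure visible directly.
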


We will provide two different proofs of this result in Section \ref{sec:identities}. We will also draw two interesting conclusions from Theorem \ref{thm:main}. The first of these is a central limit theorem for the distribution of \pmE{} numbers, first obtained by Hwang, Chern, and Duh \cite{HCD}.

\begin{thm}[Limiting distribution]\label{thm:clt}
The distribution of the coefficients of $A_n^{\pm}(t)$ is asymptotically normal
as $n \rightarrow \infty$. For $n \geq 4$, these numbers have mean $(n+1)/2$ and for $n \geq 6$, these numbers have variance $(n+1)/12$.
\end{thm}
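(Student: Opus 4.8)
The plan is to leverage Theorem~\ref{thm:main} directly, since it gives an exact generating-function identity for $A_n^{\pm}(t)/(1-t)^{n+1}$. First I would recall the standard probabilistic interpretation: dividing $A_n(t)$ by $2^{-1}$-type normalizations, one views the coefficients of $A_n^{\pm}(t)$ (suitably scaled by the total count $|S_n^{\pm}|$) as the probability mass function of a random variable $X_n^{\pm} = \des(W) + 1$ where $W$ is uniform on $S_n^{\pm}$. The classical fact (see \cite{Pet}) is that $\des(W)+1$ for $W$ uniform on all of $S_n$ is a sum of independent Bernoulli-type variables — more precisely, its generating function $A_n(t)$ factors in a way exhibiting it as a sum of independent (non-identical) indicators — and hence satisfies a CLT with mean $(n+1)/2$ and variance $(n+1)/12$. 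The goal is to show the $\pm$-refinement inherits this.

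The cleanest route: extract from \eqref{eq:seriesApm} an explicit formula for $A_n^{\pm}(t)$ in terms of $A_n(t)$ and a second Eulerian-type polynomial. Writing $m = \lceil n/2 \rceil$, identity \eqref{eq:seriesA} applied with exponent $m$ gives $\sum_k k^m t^k = A_m(t)/(1-t)^{m+1}$, so
\begin{equation*}
 \frac{A_n^{\pm}(t)}{(1-t)^{n+1}} = \frac{1}{2}\left( \frac{A_n(t)}{(1-t)^{n+1}} \pm \frac{A_m(t)}{(1-t)^{m+1}} \right),
\end{equation*}
whence $A_n^{\pm}(t) = \tfrac{1}{2}\bigl( A_n(t) \pm (1-t)^{n-m} A_m(t) \bigr)$. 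Setting $t = 1$ shows the total mass is $\tfrac12(n! \pm 0) = n!/2$ for $n \geq 2$ (as $n - m \geq 1$), so the normalization is $|S_n^{\pm}| = n!/2$. Then the probability generating function of $X_n^{\pm}$ is $p_n^{\pm}(t) := A_n^{\pm}(t)/(t \cdot n!/2)$ — a convex-combination-like expression: $A_n^{\pm}(t)/(n!/2) = A_n(t)/n! \pm (1-t)^{n-m} A_m(t)/n!$. The first term is exactly the pgf (times $t$) of the all-$S_n$ descent distribution; the second is an error term I would bound.

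Now I would compute mean and variance via derivatives of the pgf at $t=1$, or equivalently via the cumulant/moment expansion of $A_n^{\pm}(t)$ around $t=1$. Because $(1-t)^{n-m}$ vanishes to order $n - m = \lfloor n/2 \rfloor \geq 3$ at $t = 1$ once $n \geq 6$ (and to order $\geq 2$ once $n \geq 4$), the correction term $(1-t)^{n-m}A_m(t)$ contributes nothing to the value, first, or second derivative at $t=1$ in those ranges. Hence for $n \geq 4$ the mean of $X_n^{\pm}$ equals the mean of $\des + 1$ over all of $S_n$, namely $(n+1)/2$, and for $n \geq 6$ the variance equals that over all of $S_n$, namely $(n+1)/12$ — these classical values I would quote from \cite{Pet}. (The small-$n$ exclusions in the statement are exactly the orders of vanishing; I would double-check $n=4,5$ for the variance by hand against Table~\ref{tab:poseul}.)

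For the asymptotic normality I have two options, and I would present the self-contained one. Option (a): invoke the real-rootedness / Bender-type machinery — but the $A_n^{\pm}$ are not obviously real-rooted, so this needs care. Option (b), which I prefer: show $A_n^{\pm}(t)/(n!/2) = A_n(t)/n! + O\bigl(2^{-\lfloor n/2\rfloor}\bigr)$ uniformly for $t$ in a neighborhood of the unit circle, or more robustly, couple $X_n^{\pm}$ with $X_n := \des(W)+1$ for $W$ uniform on $S_n$ in total variation. Indeed $\|\mathcal L(X_n^{\pm}) - \mathcal L(X_n)\|_{\mathrm{TV}}$ is controlled by $\|(1-t)^{n-m}A_m(t)\|$ relative to $n!$: the polynomial $(1-t)^{n-m}A_m(t)$ has coefficient sum (in absolute value) at most $2^{n-m} m! $, which is $o(n!)$, so the TV distance tends to $0$. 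Since $X_n$ is a sum of independent indicators (the descent-indicator decomposition of the Eulerian distribution) and satisfies the CLT by Lindeberg/Lyapunov, $X_n^{\pm}$ does too, with the same centering and scaling. The main obstacle is making the total-variation or uniform-in-$t$ bound fully rigorous — in particular getting a clean estimate on the coefficients of $(1-t)^{n-m}A_m(t)$ and confirming it is genuinely $o(n!)$ after dividing — but since $m \approx n/2$ and $m! \cdot 2^{n/2} = o(n!)$ comfortably, this is a routine estimate rather than a deep difficulty.
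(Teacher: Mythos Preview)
Your derivation of the identity $A_n^{\pm}(t) = \tfrac12\bigl(A_n(t) \pm (1-t)^{\lfloor n/2\rfloor}A_{\lceil n/2\rceil}(t)\bigr)$ and your computation of mean and variance from it are exactly the paper's approach (this is Equation~\eqref{eq:ApmA}, and the vanishing-order argument is Proposition~\ref{prop:mommatch}). So for the moment claims you are in full agreement with the paper.

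For the asymptotic normality, however, your route is genuinely different from both proofs in the paper. The paper's first proof writes the Laplace transform of the normalized variable explicitly from Theorem~\ref{thm:main}, approximates the sum by an integral, and applies a Curtiss-type convergence criterion (Lemma~\ref{lem:KL}); its second proof observes that \emph{every} fixed moment eventually coincides with the Eulerian moment (Proposition~\ref{prop:mommatch}) and invokes the method of moments together with the known Eulerian CLT. Your argument instead bounds the total-variation distance between the $\pm$-Eulerian and Eulerian distributions by $2^{\lfloor n/2\rfloor}\lceil n/2\rceil!/n!\to 0$ and transfers the classical CLT through that. This is correct---TV distance is preserved under the affine normalization, and TV convergence dominates weak convergence---and is arguably the most elementary of the three arguments: it needs neither the Laplace-transform analysis nor the moment-determinacy of the normal law, only a crude coefficient bound and the classical Eulerian CLT. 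The paper's moment-matching is morally close to what you do (both exploit the high-order vanishing of the correction at $t=1$), but packages it differently.

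One small imprecision: your parenthetical that the Eulerian descent count is ``a sum of independent Bernoulli-type variables'' via the ``descent-indicator decomposition'' conflates two things. The indicators $\mathbf 1[w(i)>w(i+1)]$ are \emph{not} independent; what is true is that real-rootedness of $A_n(t)$ gives a Harper-type representation as a sum of independent Bernoullis with suitable parameters. Since you only need the classical CLT as a black box this does not affect your argument, but the phrasing should be tightened.
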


While one proof of Theorem \ref{thm:clt} will use Theorem \ref{thm:main} (essentially, this is the same idea outlined in \cite[Section 9.3]{HCD}), we provide another proof in Section \ref{sec:clt}. In fact we conjecture that the polynomials $A_n^{\pm}(t)$ have all roots real, which by Harper's method \cite{Pit} would give a third proof of our central limit theorem.

\begin{conj}[Real roots]\label{conj:real}
The \pmE{} polynomials have only real roots: $A_n^+(t)$ has real roots for $n\geq 1$ and $A_n^-(t)$ has real roots for $n\geq 2$.
\end{conj}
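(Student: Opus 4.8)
We describe a possible approach to Conjecture~\ref{conj:real}, built on Theorem~\ref{thm:main}. Write $m=\lceil n/2\rceil$ and $\ell=\lfloor n/2\rfloor$, so $m+\ell=n$. Combining~\eqref{eq:seriesApm} with~\eqref{eq:seriesA} applied to $m$ gives
\[
 2A_n^{\pm}(t)=A_n(t)\pm A_m(t)(1-t)^{\ell}.
\]
Here $D_n(t):=A_m(t)(1-t)^{\ell}=A_n^{+}(t)-A_n^{-}(t)$ is the signed Eulerian polynomial of Loday and D\'esarm\'enien--Foata; it has only real roots, being a product of real-rooted factors, while $A_n(t)$ is classically real-rooted with all roots in $(-\infty,0]$. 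So the conjecture says that a particular sum, and a particular difference, of two such polynomials again has only real roots (which must then lie in $(-\infty,0]$, since all coefficients are nonnegative). Two reductions are worth recording. The reversal $w\mapsto\overleftarrow{w}$ multiplies $\sgn(w)$ by $(-1)^{\binom n2}$ and replaces $\des(w)$ by $n-1-\des(w)$, so $A_n^{-}(t)=t^{n+1}A_n^{+}(1/t)$ when $n\equiv 2,3\pmod 4$ (hence it suffices to treat $A_n^{+}$ in those cases), and $A_n^{+}(t),A_n^{-}(t)$ are both palindromic when $n\equiv 0,1\pmod 4$. Also, the substitution $u=t/(1-t)$ relates $A_n(t)$ to the Fubini polynomial $F_n(u)=\sum_j j!\,S(n,j)u^j$, whose roots all lie in $[-1,0]$, and identifies the conjecture with the real-rootedness of $F_n(u)\pm F_m(u)$.

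A first simplification is that the odd case reduces to the even case. A direct computation from the Eulerian recursion $A_k(t)=ktA_{k-1}(t)+t(1-t)A_{k-1}'(t)$ shows that $D_n(t)=ntD_{n-1}(t)+t(1-t)D_{n-1}'(t)$ whenever $n$ is odd; since $A_n(t)$ obeys the same recursion for every $n$, adding and subtracting yields, for odd $n$,
\[
 A_n^{\pm}(t)=ntA_{n-1}^{\pm}(t)+t(1-t)\bigl(A_{n-1}^{\pm}\bigr)'(t).
\]
The operator $p\mapsto ntp+t(1-t)p'$ is exactly the one that propagates real-rootedness in the classical proof that the Eulerian polynomials have only real roots: it preserves the class of real-rooted polynomials with all roots in $(-\infty,0]$, and (suitably phrased) preserves interlacing. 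Hence it is enough to prove that $A_{2m}^{\pm}(t)$ has only real roots for all $m$, where
\[
 2A_{2m}^{\pm}(t)=A_{2m}(t)\pm A_m(t)(1-t)^{m}.
\]

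For the even case I would try to run an induction on $m$ carrying a strengthened hypothesis — that $A_{2m}^{+}$ and $A_{2m}^{-}$ are real-rooted and interlace in a fixed way, and interlace $A_{2m}$ and $A_{2m-1}^{\pm}$ as well — chosen so that both the Eulerian step $2m\to 2m+1$ (which is free) and the step $2m-1\to 2m$ preserve it. In the step $2m-1\to 2m$, the ``$-$'' statement follows from the ``$+$'' one by reversal when $m$ is odd, and when $m$ is even one can instead exploit the palindromic structure of $A_{2m}^{\pm}$, passing to a polynomial of roughly half the degree via $t+1/t$.

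The step $2m-1\to 2m$ is where I expect the genuine difficulty to lie, and the standard interlacing toolkit does not reach it. Indeed, $A_m(t)(1-t)^{m}$ has a zero of multiplicity $m\geq 2$ at $t=1$, strictly to the right of every zero of $A_{2m}(t)$ (whose largest zero is $0$); an elementary check then shows $A_{2m}(t)$ and $A_m(t)(1-t)^{m}$ have no common interlacer, above or below, so the familiar lemma that a common interlacer forces all nonnegative combinations to be real-rooted is unavailable, and one must explain why the high-order zero at $1$ of the signed Eulerian polynomial is completely absorbed in the sum $A_{2m}(t)+A_m(t)(1-t)^{m}$. A successful argument will thus require either an interlacing witness constructed by hand along the induction, or a direct Sturm-type count of sign changes of $A_{2m}(t)\pm A_m(t)(1-t)^{m}$ on $(-\infty,0]$ using quantitative comparison of $A_{2m}$ with $A_m(1-t)^m$. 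Verifying the conjecture, and reading off the precise interlacing pattern among $A_{2m-1}^{\pm}$ and $A_{2m}^{\pm}$, by computer for small and moderate $m$ would be a sensible way to begin.
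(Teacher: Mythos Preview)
The statement is a \emph{conjecture} in the paper, not a theorem; the paper does not prove it, only verifies it computationally for $n\leq 100$ and offers partial remarks in Section~\ref{sec:roots}. Your proposal is accordingly not a proof but a strategy outline, as you candidly acknowledge.

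Your reductions largely coincide with the paper's own observations. The decomposition $2A_n^{\pm}(t)=A_n(t)\pm(1-t)^{\lfloor n/2\rfloor}A_{\lceil n/2\rceil}(t)$ is the paper's Equation~\eqref{eq:ApmA}; the reversal symmetry you record matches Equation~\eqref{eq:symmetries}; and the odd-to-even reduction via a real-root-preserving operator is exactly the paper's remark following Equation~\eqref{eq:eulpolyrec2}. The paper goes slightly further on the even-to-even step, writing $A_{2m+2}^{\pm}=T_sT\,A_{2m}^{\pm}+T_tT\,A_{2m}^{\mp}$ and noting that $T_s$ and $T_t$ preserve real-rootedness, with empirical evidence that the two summands on the right interlace---but it stops there, leaving the conjecture open.

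There is, however, a concrete obstacle to the strengthened induction hypothesis you propose. You suggest carrying the hypothesis that $A_{2m}^{+}$ and $A_{2m}^{-}$ are real-rooted \emph{and interlace in a fixed way}. The paper explicitly states in Section~\ref{sec:roots} that the roots of $A_n^{+}(t)$ and $A_n^{-}(t)$ are \emph{not} interlacing in general, so a computer check (which you sensibly recommend as a first step) would presumably kill this particular hypothesis quickly. Any workable strengthening would need to avoid interlacing of $A_n^{+}$ with $A_n^{-}$ and instead rely on something like the interlacing of the two summands $T_sT\,A_{2m}^{\pm}$ and $T_tT\,A_{2m}^{\mp}$ that the paper reports empirically. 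Your diagnosis that the even step is the crux, and that the high-order zero of $(1-t)^m$ at $t=1$ rules out the common-interlacer lemma, is apt; the conjecture remains open.
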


We have verified Conjecture \ref{conj:real} for $n\leq 100$. We provide some remarks on the conjecture in Section \ref{sec:roots}.

The second conclusion that we can draw from Theorem \ref{thm:main} has to do with card shuffling.

\begin{thm}[Sign after a riffle shuffle]\label{thm:shuffle}
For $a$-shuffling starting at the identity, the probability of having sign $1$ after $k$ steps is equal to
\[
 \frac{1}{2} + \frac{1}{2a^{k \lfloor n/2 \rfloor}} .
\]
\end{thm}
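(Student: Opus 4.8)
The plan is to connect the probability of having positive sign after $k$ iterated $a$-shuffles to the generating function identity of Theorem~\ref{thm:main}. First I would recall the standard description of the Gilbert--Shannon--Reeds $a$-shuffle: the probability that $a$-shuffling the identity produces a given permutation $w$ is $\binom{a+n-1-\des(w)}{n}/a^n$, and iterating $k$ times is the same as a single $a^k$-shuffle, so the probability of reaching $w$ after $k$ steps is $\binom{a^k+n-1-\des(w)}{n}/a^{kn}$. Summing this over all $w\in S_n^+$ gives the desired probability, so the quantity to evaluate is
\[
\pp(\sgn = 1 \text{ after } k \text{ steps}) = \frac{1}{a^{kn}}\sum_{w\in S_n^+}\binom{a^k+n-1-\des(w)}{n}.
\]

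Next I would rewrite this sum using the positive Eulerian polynomial. Grouping permutations by their number of descents, $\sum_{w\in S_n^+}\binom{a^k+n-1-\des(w)}{n} = \sum_{j\geq 0}\el{n}{j+1}^+\binom{a^k+n-1-j}{n}$. The key observation is that the coefficient-extraction form of Theorem~\ref{thm:main} says exactly that, for any nonnegative integer $m$, $\sum_{j}\el{n}{j+1}^+\binom{m+n-1-j}{n} = (m^n + m^{\lceil n/2\rceil})/2$; this is the familiar manipulation that turns $A_n^\pm(t)/(1-t)^{n+1} = \sum_{m\ge0} c_m t^m$ into $c_m = \sum_j \el{n}{j+1}^{\pm}\binom{m+n-1-j}{n}$ by expanding $1/(1-t)^{n+1}$ as a binomial series and reading off the coefficient of $t^m$. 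Applying this with $m = a^k$ gives $\sum_{w\in S_n^+}\binom{a^k+n-1-\des(w)}{n} = \tfrac12\bigl((a^k)^n + (a^k)^{\lceil n/2\rceil}\bigr)$.

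Finally I would divide by $a^{kn}$ to obtain
\[
\pp(\sgn = 1) = \frac{1}{2a^{kn}}\bigl(a^{kn} + a^{k\lceil n/2\rceil}\bigr) = \frac{1}{2} + \frac{1}{2\,a^{k(n - \lceil n/2\rceil)}} = \frac{1}{2} + \frac{1}{2\,a^{k\lfloor n/2\rfloor}},
\]
using $n - \lceil n/2\rceil = \lfloor n/2\rfloor$, which matches the claimed formula.

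I do not expect any serious obstacle; the proof is essentially a dictionary translation between the shuffling model and Theorem~\ref{thm:main}. The one point requiring a little care is making sure the combinatorial description of the $a$-shuffle (and the fact that composing $k$ $a$-shuffles is an $a^k$-shuffle) is stated cleanly, and that the binomial-series extraction of coefficients from $A_n^\pm(t)/(1-t)^{n+1}$ is valid for the specific integer value $m = a^k$ — both of which are standard, the former going back to Bayer--Diaconis. If one wants an alternative route, one could instead cite the type-$A$ analogue of the shuffling-generating-function framework directly and substitute $t = 1$ appropriately, but the computation above via Theorem~\ref{thm:main} is the most self-contained.
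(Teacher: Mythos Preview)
Your argument is correct and coincides with the paper's second proof: extract the coefficient of $t^{a^k}$ from the identity of Theorem~\ref{thm:main} and match it against the Bayer--Diaconis formula for $P_{n,a}(w)$. One small point of care: the paper records the shuffle formula with $\des(w^{-1})$ rather than $\des(w)$, and then uses $\sgn(w)=\sgn(w^{-1})$ to justify replacing the sum over $\des(w^{-1})$ by the sum over $\des(w)$ on $S_n^+$---a step you are implicitly using and should make explicit.
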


By shuffling here we mean the Gilbert-Shannon-Reeds (GSR) model of riffle shuffling, the details of which will be provided in Section \ref{sec:shuffle}. In that section we also present two further proofs of Theorem \ref{thm:shuffle}, including one that follows immediately from work of Amy Pang. A similar result for shelf-shuffling machines is also discussed there, which says that (for ultimately trivial reasons) the probability of having sign $1$ after one pass through a shelf-shuffling machine is \emph{exactly} $1/2$.

\subsection*{Acknowledgements} Fulman was partially supported by Simons Foundation Grant 400528. Petersen was partially supported by Simons Foundation Collaboration Travel Grant 353772 and by a DePaul University College of Science and Health Faculty Summer Research Grant. The authors thank Persi Diaconis, Amy Pang, and Victor Reiner for helpful discussions.

\section{Identities for \pmE{} numbers and polynomials}\label{sec:identities}

In this section we will prove Theorem \ref{thm:main} and discuss some identities and recurrences for \pmE{} numbers and polynomials.

\subsection{Proofs of Theorem \ref{thm:main}}\label{sec:mainproof}

We will present two proofs of Theorem \ref{thm:main}. The first proof uses a generating function for the joint distribution of descents and cycle structure studied by Fulman \cite{F} and the second uses results of Desarm\'enien and Foata \cite{DF}, and Wachs \cite{W}, for the joint distribution of descents and inversions.

We begin the first proof by recalling an identity from \cite[Theorem 1]{F}:
\begin{equation}\label{eq:F1}
\sum_{n \geq 0} \frac{u^n \sum_{w \in S_n} t^{\des(w)+1} \prod_i x_i^{n_i(w)}}{(1-t)^{n+1}}
= \sum_{k \geq 1} t^k \prod_j (1-x_ju^j)^{-f_{j,k}}.
\end{equation}
In this identity, the $x_i$ are indeterminates, $n_i(w)$ is the number of $i$-cycles in the permutation $w$, and
\[
f_{j,k} = \frac{1}{j} \sum_{d|j} \mu(d) k^{j/d},
\]
where $\mu$ is the M\"obius function of elementary number theory. While we do not make use of the fact here, the quantity $f_{j,k}$ counts the number of primitive necklaces of length $j$ drawn from an alphabet of $k$ letters. We will also make use of the following lemma, which is found Marshall Hall's group theory book \cite{Hall}, in connection with the commutator calculus on the free group.

\begin{lemma}\label{lem:usesim}
For any integer $k\geq 1$, we have the following power series identity:
\[ \prod_{j \geq 1} (1-u^j/k^j)^{-f_{j,k}} = \frac{1}{1-u}. \]
\end{lemma}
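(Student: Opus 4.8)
The plan is to recognize the product on the left as a specialization of the cycle index of the symmetric groups and to evaluate it by a direct generating-function manipulation. Recall the exponential formula identity
\[
\sum_{n \geq 0} u^n \sum_{w \in S_n} \prod_i x_i^{n_i(w)} = \exp\!\left( \sum_{j \geq 1} \frac{x_j u^j}{j} \right),
\]
and more relevantly the factorization of the right side of \eqref{eq:F1} when one forgets the descent variable. Setting $t \to 1$ is not quite legitimate because of the $(1-t)^{n+1}$ in the denominator, so instead I would argue as follows. First I would use the classical fact that
\[
\prod_{j \geq 1}(1 - y_j)^{-f_{j,k}} = \frac{1}{\displaystyle 1 - \sum_{j\ge 1}\Big(\text{something}\Big)}
\]
is cleaner to see via logarithms: taking $\log$ of the left-hand side of Lemma \ref{lem:usesim} gives
\[
-\sum_{j \geq 1} f_{j,k} \log\!\left(1 - \frac{u^j}{k^j}\right) = \sum_{j \geq 1} f_{j,k} \sum_{m \geq 1} \frac{u^{jm}}{m\, k^{jm}}
= \sum_{N \geq 1} \frac{u^N}{k^N} \sum_{jm = N} \frac{f_{j,k}}{m}.
\]
So the key identity to establish is that for every $N \geq 1$,
\[
\sum_{jm = N} \frac{f_{j,k}}{m} = \frac{1}{N}\sum_{j \mid N} j\, f_{j,k} = \frac{k^N}{N},
\]
which, after multiplying by $N$, is exactly the necklace-counting identity $\sum_{j \mid N} j f_{j,k} = k^N$ (every word of length $N$ over a $k$-letter alphabet has a unique primitive period $j \mid N$, contributing $j$ letters' worth of rotations each). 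Granting that, the logarithm of the left-hand side of the lemma becomes $\sum_{N \geq 1} u^N/N = -\log(1-u)$, and exponentiating yields $1/(1-u)$ as desired.

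The one routine identity I would need to pin down is $\sum_{j\mid N} j f_{j,k} = k^N$; this is immediate from Möbius inversion applied to the definition $j f_{j,k} = \sum_{d \mid j}\mu(d)k^{j/d}$, since summing over $j \mid N$ telescopes via $\sum_{j \mid N}\sum_{d \mid j}\mu(d)k^{j/d} = \sum_{e \mid N} k^{e} \sum_{d \mid (N/e)}\mu(d) = k^N$. Alternatively one can cite the necklace interpretation mentioned in the text. I expect the only mild subtlety to be bookkeeping with the double sum reindexing $jm = N$ and making sure the manipulations are valid as formal power series in $u$ (they are, since for fixed $N$ only finitely many terms contribute), so convergence is not an issue.

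An alternative, even shorter route: substitute $x_j = k^{-j}$ and $u \mapsto u$ directly into the Fulman identity \eqref{eq:F1} after setting all $x_i$ equal to a single variable tracking total cycle count — but this still passes through the same necklace identity, so I would present the logarithmic computation above as the cleanest self-contained argument. The main (very minor) obstacle is simply organizing the reindexing cleanly; there is no conceptual difficulty.
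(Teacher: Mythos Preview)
Your argument is correct: taking logarithms, expanding $-\log(1-u^j/k^j)$ as a power series, reindexing by $N=jm$, and invoking the necklace identity $\sum_{j\mid N} j f_{j,k}=k^N$ (which you correctly deduce by M\"obius inversion) gives $\sum_{N\ge 1} u^N/N=-\log(1-u)$, whence the result. The formal-power-series justification you mention is also fine.

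As for comparison, the paper does not prove Lemma~\ref{lem:usesim} at all; it simply cites Marshall Hall's group theory text, noting the identity arises there in connection with the commutator calculus on the free group. So your proposal is not redundant with the paper's treatment but rather supplies a short self-contained proof where the paper gives only a reference. Your logarithmic/necklace argument is the standard elementary route and is perfectly adequate here; the side remarks about specializing \eqref{eq:F1} are unnecessary for the proof and could be dropped.
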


The upshot for us comes from setting all $x_i=-1$ and $u=-u$ in Equation \eqref{eq:F1}. Note that $\prod_i (-1)^{n_i(w)} = (-1)^{c(w)}$, where $c(w)$ is the number of cycles in $w$. Since $\sgn(w) = (-1)^{n-c(w)}=(-1)^{n+c(w)}$, we have the following series identity:
\begin{equation}\label{eq:Fspecial}
 \sum_{n \geq 0} \frac{u^n \sum_{w \in S_n} t^{\des(w)+1} \sgn(w)}{(1-t)^{n+1}}
 = \sum_{k \geq 1} t^k \prod_{j \geq 1} (1+(-u)^j)^{-f_{j,k}},
\end{equation}
which is key in what follows.

\begin{proof}[First proof of Theorem \ref{thm:main}]
Since Equation \eqref{eq:seriesA} gives
\[ \sum_{w \in S_n} t^{\des(w)+1} = (1-t)^{n+1} \sum_{k \geq 1} t^k k^n,\]
it is enough to prove that
\begin{equation} \label{eq:needprove}
\sum_{w \in S_n} t^{\des(w)+1} \sgn(w) = (1-t)^{n+1} \sum_{k \geq 1}
 t^k k^{\lceil n/2 \rceil}.
\end{equation}

We now attack the product on the right-hand side of Equation \eqref{eq:Fspecial} with algebra and an application of Lemma \ref{lem:usesim}. Letting $v=ku$, we have:
\begin{align*}
\prod_{j \geq 1} \left(\frac{1}{1+u^j}\right)^{f_{j,k}} &= \prod_{j \geq 1} \left( \frac{1-u^j}{1-u^{2j}} \right)^{f_{j,k}}, \\
 &=\prod_{j \geq 1} \left( \frac{1-v^j/k^j}{1-(uv)^j/k^j} \right)^{f_{j,k}},\\
 &= \frac{1-v}{1-uv}= \frac{1-ku}{1-ku^2}.
\end{align*}
Thus by setting $u=-u$, we have the identity
\[ \prod_{j \geq 1} (1+(-u)^j)^{-f_{j,k}} = \frac{1+ku}{1-ku^2}, \]
and we conclude that
\begin{align*}
 \sum_{n \geq 0} \frac{u^n \sum_{w \in S_n} t^{\des(w)+1} \sgn(w)}{(1-t)^{n+1}}
&= \sum_{k \geq 1} t^k \frac{1+ku}{1-ku^2} \\
&= \sum_{k \geq 1} t^k (1+ku^2 + k^2u^4 + \cdots + ku + k^2u^3 + k^3u^5+ \cdots),\\
&= \sum_{n\geq 0}\sum_{k \geq 1} u^n t^k  k^{\lceil n/2\rceil}.
\end{align*}
Taking the coefficient of $u^n$ on both sides proves Equation \eqref{eq:needprove}.
\end{proof}

We now turn to the second proof of Theorem \ref{thm:main}. For our second proof of Theorem \ref{thm:main}, we will use the generating function for the refinement of the Eulerian polynomial that gives the joint distribution of inversions and descents:
\[
A_n(q,t) = \sum_{w \in S_n} q^{\inv(w)} t^{\des(w)+1},
\]
where $\inv(w)$ is the number of inversions of $w$, defined as the number of pairs $(i,j)$ with $i<j$ and $w(i)>w(j)$.

Since the sign of $w$ is $\sgn(w)=(-1)^{\inv(w)}$, it follows that
\[ A_n(-1,t) = A_n^+(t)-A_n^-(t).\]
Loday \cite{Lo} initiated an investigation of the coefficients of $A_n(-1,t)$, which he and others, such as Desarm\'enian and Foata \cite{DF}, called ``signed Eulerian numbers.'' Among other things, Desarm\'enian and Foata prove that \cite[Theorem 1]{DF}:
\begin{equation}\label{eq:desar}
A_{2n}(-1,t) = (1-t)^n A_n(t) \quad \text{and} \quad A_{2n+1}(-1,t) = (1-t)^n A_{n+1}(t),
\end{equation}
The identities in \eqref{eq:desar} can be proved via manipulations of identities for $A_n(q,t)$ as $q \rightarrow -1$. Wachs \cite{W} also gives a combinatorial proof of \eqref{eq:desar} with a sign-reversing involution.

By adding or subtracting $A_m(t) = A_m^+(t) + A_m^-(t)$, with $m=2n$ or $m=2n+1$, to the equations in \eqref{eq:desar} we can now conclude
\begin{equation}\label{eq:ApmA}
 2 A_{2n}^{\pm}(t) = A_{2n}(t) \pm (1-t)^n A_n(t) \quad \text{and} \quad 2 A_{2n+1}^{\pm}(t) = A_{2n+1}(t) \pm (1-t)^n A_{n+1}(t).
\end{equation}
The second proof of Theorem \ref{thm:main} now follows from basic series manipulations.

\begin{proof}[Second proof of Theorem \ref{thm:main}]
Using the equations from \eqref{eq:ApmA} and the identity of Equation \eqref{eq:seriesA}, we obtain
\begin{align*}
 \frac{2A_{2n}^{\pm}(t)}{(1-t)^{2n+1}} &= \frac{A_{2n}(t)}{(1-t)^{2n+1}} \pm \frac{A_n(t)}{(1-t)^{n+1}},\\
 &=\sum_{k\geq 0} k^{2n}t^k \pm \sum_{k\geq 0} k^n t^k,\\
 &=\sum_{k\geq 0} [k^{2n}\pm k^n] t^k.
\end{align*}
and
\begin{align*}
 \frac{2A_{2n+1}^{\pm}(t)}{(1-t)^{2n+2}} &= \frac{A_{2n+1}(t)}{(1-t)^{2n+2}} \pm \frac{A_{n+1}(t)}{(1-t)^{n+2}} \\
 &= \sum_{k\geq 0} k^{2n+1} t^k \pm \sum_{k\geq 0} k^{n+1} t^k,\\
 &= \sum_{k\geq 0} [k^{2n+1} \pm k^{n+1}]t^k.
\end{align*}

Thus, for any $n \geq 1$,
\[
 \frac{A_n^{\pm}(t)}{(1-t)^{n+1}} = \sum_{k\geq 0} \frac{k^n \pm k^{\lceil n/2 \rceil}}{2} t^k,
\]
as desired.
\end{proof}

\subsection{Symmetries and recurrences}\label{sec:symmetry}

Having established our main series identity for the \pmE{} polynomials, we gather some interesting features of them now.

First, we consider the homogeneous Eulerian polynomials
\begin{align*}
 A_n(s,t) = (s^n/t)A_n(t/s) &=\sum_{w \in S_n} s^{n-1-\des(w)}t^{\des(w)} \\
 &=\sum_{w \in S_n} s^{\asc(w)}t^{\des(w)},
\end{align*}
where $\asc(w)$ denotes the number of \emph{ascents} of $w$, as mentioned in the introduction. Similarly, we let
\begin{align*}
 A_n^{\pm}(s,t) = (s^n/t)A_n^{\pm}(t/s) &=\sum_{w \in S_n^{\pm}} s^{n-1-\des(w)}t^{\des(w)} \\
 &=\sum_{w \in S_n^{\pm}} s^{\asc(w)}t^{\des(w)}.
\end{align*}
Note that while $A_n^{\pm}(t)$ has degree $n-1$ or $n$ in $t$, this homogeneous version always has degree $n-1$. For example, $A_3^+(s,t) = s^2+2st$, $A_3^-(s,t) = 2st+t^2$, $A_4^+(s,t)=s^3 + 5s^2t +5st^2 +t^3$, and $A_4^-(s,t)= 6s^2t + 6st^2$.

Because ascents and descents are swapped under the involution that reverses a permutation, $w\to \overleftarrow{w}=w(n)w(n-1)\cdots w(1)$, we have that
\[
A_n(s,t) = A_n(t,s),
\]
or equivalently,
\[
 \el{n}{k} = \el{n}{n+1-k}.
\]
That is to say, the rows of Table \ref{tab:eul} are palindromic. It is also quite well-known that the Eulerian numbers satisfy the recurrence
\begin{equation}\label{eq:eulerrec}
\el{n}{k} = (n+1-k)\el{n-1}{k-1}+ k\el{n-1}{k},
\end{equation}
with initial conditions $\el{n}{1}=\el{n}{n}=1$ for $n\geq 1$. This identity can be explained bijectively, by considering whether the insertion of the letter $n$ into a permutation
in $S_{n-1}$ leaves the number of descents unchanged or increases the number of
descents by one. See, e.g., \cite[Chapter 1]{Pet}.

The two-term recurrence of Equation \eqref{eq:eulerrec} is neatly summarized with the polynomial recurrence
\begin{equation}\label{eq:Trec}
A_{n+1}(s,t)= T\left[A_n(s,t)\right],
\end{equation}
where $T$ is the linear operator
\[
T=s+ t+ st\left( \frac{d}{ds} + \frac{d}{dt}\right).
\]
See Section 7 of Br\'and\"en's article \cite{Br} for a discussion of sequences of polynomials defined by linear operators such as these.

Now we present similar recurrences and symmetries for the \pmE{} numbers and polynomials.

\begin{prop}[Symmetries]
For any $n\geq 1$:
\begin{equation}\label{eq:symmetries}
 A_n^{\pm}(s,t) = \begin{cases} A_n^{\pm}(t,s) & \text{if } n\equiv 0, 1 \pmod 4 ,\\
               A_n^{\mp}(t,s) & \text{if } n\equiv 2,3 \pmod 4 .
               \end{cases}
\end{equation}
In terms of coefficients,
\begin{equation}\label{eq:symmetries2}
 \el{n}{k}^{\pm} = \begin{cases} \el{n}{n+1-k}^{\pm} & \text{if } n\equiv 0, 1 \pmod 4 ,\\
               \el{n}{n+1-k}^{\mp} & \text{if } n\equiv 2,3 \pmod 4 .
               \end{cases}
\end{equation}
\end{prop}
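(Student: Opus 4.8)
The plan is to prove this bijectively, using the reversal involution $w \mapsto \overleftarrow{w} = w(n)w(n-1)\cdots w(1)$ on $S_n$ and tracking how it affects the sign. As noted in the introduction, reversal interchanges ascents and descents, $\asc(\overleftarrow{w}) = \des(w)$ and $\des(\overleftarrow{w}) = \asc(w)$, so the monomial $s^{\asc(w)}t^{\des(w)}$ that $w$ contributes to $A_n^{\pm}(s,t)$ becomes $s^{\des(w)}t^{\asc(w)}$ after applying the involution --- exactly the monomial obtained by swapping $s$ and $t$. This already accounts for the palindromicity $A_n(s,t) = A_n(t,s)$; the only new ingredient needed is the behavior of $\sgn$ under reversal.

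First I would compute $\sgn(\overleftarrow{w})$. Writing $r_n \in S_n$ for the permutation $i \mapsto n+1-i$, we have $\overleftarrow{w} = w \circ r_n$, hence $\sgn(\overleftarrow{w}) = \sgn(w)\sgn(r_n)$. Since $r_n$ is the product of the $\lfloor n/2 \rfloor$ disjoint transpositions $(1,n)(2,n-1)\cdots$, we get $\sgn(r_n) = (-1)^{\lfloor n/2 \rfloor}$. Now $\lfloor n/2 \rfloor$ is even precisely when $n \equiv 0, 1 \pmod 4$ and odd precisely when $n \equiv 2, 3 \pmod 4$, which is the source of the case split in the statement.

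Then I would finish as follows. If $n \equiv 0, 1 \pmod 4$, reversal preserves sign, so it restricts to an involution of each of $S_n^+$ and $S_n^-$; reindexing the sum $A_n^{\pm}(s,t) = \sum_{w \in S_n^{\pm}} s^{\asc(w)}t^{\des(w)}$ by $w \mapsto \overleftarrow{w}$ then gives $A_n^{\pm}(s,t) = A_n^{\pm}(t,s)$. If instead $n \equiv 2, 3 \pmod 4$, reversal flips the sign, so it is a bijection $S_n^+ \to S_n^-$ (and conversely); the same reindexing now gives $A_n^{\pm}(s,t) = A_n^{\mp}(t,s)$. Extracting the coefficient of $s^{n-k}t^{k-1}$ on both sides yields the equivalent statement \eqref{eq:symmetries2} for $\el{n}{k}^{\pm}$.

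I do not expect a genuine obstacle here: the only things to get right are the sign of the reversal permutation $r_n$ and the translation of the parity of $\lfloor n/2 \rfloor$ into residues mod $4$. For completeness I would also note a purely algebraic alternative: homogenizing the identities \eqref{eq:ApmA} gives $2A_m^{\pm}(s,t) = A_m(s,t) \pm (s-t)^{\lfloor m/2 \rfloor} A_{\lceil m/2 \rceil}(s,t)$, and swapping $s \leftrightarrow t$ --- using $A_m(s,t) = A_m(t,s)$ and $A_{\lceil m/2\rceil}(s,t) = A_{\lceil m/2\rceil}(t,s)$ --- multiplies the second term by $(-1)^{\lfloor m/2 \rfloor}$, producing the same dichotomy according to $m \bmod 4$.
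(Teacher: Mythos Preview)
Your proof is correct and follows essentially the same approach as the paper: both use the reversal involution $w \mapsto \overleftarrow{w}$, which swaps ascents with descents, and then determine how it affects sign. The only cosmetic difference is that the paper computes the sign change via $\inv(\overleftarrow{w}) = \binom{n}{2} - \inv(w)$ and the parity of $\binom{n}{2}$, whereas you write $\overleftarrow{w} = w \circ r_n$ and use $\sgn(r_n) = (-1)^{\lfloor n/2 \rfloor}$; since $\binom{n}{2}$ and $\lfloor n/2 \rfloor$ have the same parity, these are equivalent.
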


In other words, the rows of Tables \ref{tab:poseul} and \ref{tab:negeul} are palindromic for $n\equiv 0, 1 \pmod 4$, while the rows of the two tables are mirror images when $n\equiv 2, 3 \pmod 4$.

\begin{proof}
These symmetries are a consequence of the reversal involution previously discussed: $w \leftrightarrow \overleftarrow{w}$. It is straightforward to check that this map has the property that for any $w\in S_n$, $\des(\overleftarrow{w}) = n - 1 - \des(w)$, while $\inv(\overleftarrow{w}) = \binom{n}{2} - \inv(w)$.

If $n \equiv 0, 1 \pmod 4$, then $\binom{n}{2}$ is even, in which case $\inv(\overleftarrow{w}) \equiv \inv(w) \pmod 2$, and hence $\sgn(\overleftarrow{w}) =\sgn(w)$. Otherwise, if $n \equiv 2, 3 \pmod 4$, we see that $\binom{n}{2}$ is odd, and therefore $\overleftarrow{w}$ and $w$ have opposite sign. This completes the proof.
\end{proof}

To state the recurrence result, we first let
\[
 T_s = s + \frac{st}{2}\left( \frac{d}{ds} + \frac{d}{dt} \right) \quad \mbox{ and } \quad T_t = t + \frac{st}{2}\left( \frac{d}{ds} + \frac{d}{dt} \right).
\]

\begin{prop}[Recurrences]\label{prp:eulpmrec}
We have the following recurrences, for any $m\geq 1$:
\begin{align}
 A_{2m}^{\pm} &= T_s A_{2m-1}^{\pm}(s,t) + T_t A_{2m-1}^{\mp}(s,t), \label{eq:eulpolyrec1}\\
 A_{2m+1}^{\pm} &= T A_{2m}^{\pm}(s,t).\label{eq:eulpolyrec2}
\end{align}
In terms of coefficients, we have
\begin{align}
 \el{2m}{k}^{\pm} &= \el{2m-1}{k-1}^{\mp} + \left(\frac{2m-k}{2}\right)\el{2m-1}{k-1} \label{eq:eulpmrec1} \\
 & \quad +\left(\frac{k-1}{2}\right)\el{2m-1}{k}+\el{2m-1}{k}^{\pm}, \nonumber\\
 \el{2m+1}{k}^{\pm} &= (2m+2-k)\el{2m}{k-1}^{\pm}+k\el{2m}{k}^{\pm}.\label{eq:eulpmrec2}
\end{align}
\end{prop}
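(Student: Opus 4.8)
The plan is to reduce the \pmE{} recurrences to the ordinary Eulerian recurrence \eqref{eq:Trec} together with the Desarm\'enien--Foata identities \eqref{eq:desar}, by writing each \pmE{} polynomial as the average of an ``unsigned'' and a ``signed'' piece. Set $\Delta_n(s,t) = A_n^{+}(s,t) - A_n^{-}(s,t)$, the homogeneous signed Eulerian polynomial, so that $A_n^{\pm}(s,t) = \tfrac12\bigl(A_n(s,t) \pm \Delta_n(s,t)\bigr)$. Observe that $T_s + T_t = T$ and that $T_s - T_t$ is simply multiplication by $s - t$. Since $T$, $T_s$, $T_t$ are linear, adding and subtracting the ``$+$'' and ``$-$'' cases shows that \eqref{eq:eulpolyrec1} is equivalent to the pair of identities
\[
 A_{2m}(s,t) = T[A_{2m-1}(s,t)], \qquad \Delta_{2m}(s,t) = (s-t)\,\Delta_{2m-1}(s,t),
\]
while \eqref{eq:eulpolyrec2} is equivalent to
\[
 A_{2m+1}(s,t) = T[A_{2m}(s,t)], \qquad \Delta_{2m+1}(s,t) = T[\Delta_{2m}(s,t)].
\]
The first identity in each pair is \eqref{eq:Trec}, so everything comes down to proving the two recurrences for $\Delta_n$.

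For these I would homogenize \eqref{eq:desar} (equivalently, take differences in \eqref{eq:ApmA}), which yields the closed forms $\Delta_{2\ell}(s,t) = (s-t)^{\ell}A_{\ell}(s,t)$ and $\Delta_{2\ell+1}(s,t) = (s-t)^{\ell}A_{\ell+1}(s,t)$ for all $\ell \ge 0$. The recurrence $\Delta_{2m}(s,t) = (s-t)\,\Delta_{2m-1}(s,t)$ is then immediate from the cases $\ell = m$ and $\ell = m-1$. For the other recurrence, note that $(\partial_s + \partial_t)(s-t) = 0$, so by the Leibniz rule $(\partial_s + \partial_t)\bigl[(s-t)^m f\bigr] = (s-t)^m(\partial_s + \partial_t)f$ for any polynomial $f$; hence $T$ commutes with multiplication by $(s-t)^m$, and
\[
 T[\Delta_{2m}(s,t)] = T\bigl[(s-t)^m A_m(s,t)\bigr] = (s-t)^m\,T[A_m(s,t)] = (s-t)^m A_{m+1}(s,t) = \Delta_{2m+1}(s,t),
\]
where the third equality uses \eqref{eq:Trec}. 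This proves \eqref{eq:eulpolyrec1} and \eqref{eq:eulpolyrec2}.

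The coefficient identities \eqref{eq:eulpmrec1} and \eqref{eq:eulpmrec2} then follow by applying $T_s$, $T_t$, and $T$ to the homogeneous expansion $A_n^{\pm}(s,t) = \sum_k \el{n}{k}^{\pm}\,s^{n-k}t^{k-1}$ and reading off the coefficient of $s^{2m-k}t^{k-1}$, respectively $s^{2m+1-k}t^{k-1}$. The one point worth noting is that in \eqref{eq:eulpmrec1} the $\tfrac{st}{2}(\partial_s + \partial_t)$ parts of $T_s$ (acting on $A_{2m-1}^{\pm}$) and of $T_t$ (acting on $A_{2m-1}^{\mp}$) combine to $\tfrac{st}{2}(\partial_s + \partial_t)A_{2m-1}$, which is why the \emph{unsigned} Eulerian numbers $\el{2m-1}{k-1}$ and $\el{2m-1}{k}$ appear there, alongside the $\el{2m-1}{k}^{\pm}$ and $\el{2m-1}{k-1}^{\mp}$ coming from the $s$-part of $T_s$ and the $t$-part of $T_t$. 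I do not expect a genuine obstacle here: the content is entirely in the operator relations $T = T_s + T_t$ and $s-t = T_s - T_t$, the fact that $s-t$ is annihilated by $\partial_s + \partial_t$, and the input \eqref{eq:desar}; the only labor is the degree bookkeeping. A direct bijective proof via inserting the letter $n$ into a permutation of $[n-1]$ seems less convenient, since such an insertion at the $i$-th gap multiplies the sign by $(-1)^{\,n-1-i}$, which is not a function of $\des$ and $\sgn$ alone — the signed-polynomial route above sidesteps this entirely.
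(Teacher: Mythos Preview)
Your proof is correct, and it takes a genuinely different route from the paper. The paper does not prove Proposition~\ref{prp:eulpmrec} in-line; it points to the combinatorial argument in \cite{DS} based on inserting the letter $n$ into a permutation of $[n-1]$ and remarks that ``there are many cases to check carefully; we do not know of a simple argument.'' Your approach, by contrast, is purely algebraic: you split $A_n^{\pm}(s,t) = \tfrac12(A_n(s,t) \pm \Delta_n(s,t))$, use $T_s+T_t=T$ and $T_s-T_t=(s-t)\cdot$, and then reduce everything to the ordinary Eulerian recurrence $A_{n+1}=T[A_n]$ together with the closed forms $\Delta_{2\ell}=(s-t)^\ell A_\ell$ and $\Delta_{2\ell+1}=(s-t)^\ell A_{\ell+1}$ coming from \eqref{eq:desar}. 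The key observation that $(\partial_s+\partial_t)(s-t)=0$, hence $T$ commutes with multiplication by $(s-t)^m$, is exactly what makes the odd-to-even-index step go through.

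What each approach buys: the insertion argument in \cite{DS} is self-contained and bijective, but the sign of the insertion depends on position rather than on $(\des,\sgn)$ alone, which is why the case analysis proliferates (you correctly flag this at the end). Your argument is much shorter and exhibits \eqref{eq:eulpolyrec1}--\eqref{eq:eulpolyrec2} as formal consequences of \eqref{eq:Trec} and \eqref{eq:desar}, at the cost of importing the Desarm\'enien--Foata identity as a black box. Since the paper already invokes \eqref{eq:desar} in Section~\ref{sec:mainproof}, that cost is zero here, and your argument is in fact the ``simple argument'' the authors say they do not know.
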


These recurrences can be explained combinatorially by considering the effect of inserting $n+1$ into a permutation of length $n$, much as one proves the classical Eulerian recurrence in Equation \eqref{eq:eulerrec}. However, there are many cases to check carefully; we do not know of a simple argument. The full proof of Proposition \ref{prp:eulpmrec} can be found in \cite{DS}.

\subsection{Real roots}\label{sec:roots}

In the introduction, we presented Conjecture \ref{conj:real}, which asserted that all the roots of the \pmE{} polynomials are real. It is known that the classical Eulerian polynomials are real rooted
since Frobenius; see \cite{Br} for a thorough survey of similar sequences of polynomials.

From Equation \eqref{eq:symmetries}, we find the univariate \pmE{}
polynomials satisfy $A_n^{\pm}(t) = t^{n+1} A_n^{\pm}(1/t)$ if
$n\equiv 0, 1 \pmod 4$ and $A_n^{\pm}(t) = t^{n+1} A_n^{\mp}(1/t)$ if
$n\equiv 2, 3 \pmod 4$.  This means the nonzero roots of
$A_n^{\pm}(t)$ come in reciprocal pairs when $n\equiv 0,1 \pmod 4$,
and when $n\equiv 2,3 \pmod 4$, the nonzero roots of $A_n^+(t)$ are
reciprocals of the nonzero roots of $A_n^-(t)$. It would be lovely if the
roots of $A_n^+(t)$ and $A_n^-(t)$ were interlacing. This is false in
general.

A fact one encounters in the study of real rootedness \cite{Br} is that the operator $T$ preserves real roots. Thus, by Equation \eqref{eq:eulpolyrec2}, we know that if $A_{2m}^{\pm}(t)$ has real roots, then so does $A_{2m+1}^{\pm}(t)$. Therefore Conjecture \ref{conj:real} only needs to be proved in the even case. To this end, we can apply the recurrences of Proposition \ref{prp:eulpmrec} twice in a row to see
\[
A_{2m+2}^{\pm}(s,t) =T_sT A_{2m}^{\pm}(s,t) +T_tT A_{2m}^{\mp}(s,t).
\]
It is not difficult to prove that the operators $T_s$ and $T_t$ preserve real-rootedness, and empirical evidence suggests that the pair of polynomials on the right are real and interlacing as well.

\section{Central limit theorems}\label{sec:clt}

In this section we prove Theorem \ref{thm:clt}. In fact we will give two proofs of this central limit theorem, one using ``analytic combinatorics'' starting from the identity in Theorem \ref{thm:main}, and the other using the ``method of moments'' in comparison with the usual Eulerian distribution.

For our first proof we start with the identity of Theorem \ref{thm:main} and use a  modified Curtiss' theorem proved in Kim and Lee's paper \cite[Proposition 2.2]{KL}.  We state a version of that result in the following lemma.

\begin{lemma}\label{lem:KL}
Let $X$ be a probability distribution and let $\{X_n\}$ be a sequence of random variables, with moment generating functions $M_X$ and $M_{X_n}$, respectively. Suppose there is an open interval $I \subseteq \RR$ on which
\[
 \lim_{n\to \infty} M_{X_n}(s) = M_X(s)
\]
for all $s \in I$. Then $X_n$ converges in distribution to $X$.
\end{lemma}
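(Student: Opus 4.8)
The statement to prove is Lemma~\ref{lem:KL}: a modified Curtiss' theorem where pointwise convergence of moment generating functions on an \emph{open interval} (rather than a neighborhood of $0$, or the whole line) suffices for convergence in distribution. The plan is to reduce to the classical Curtiss theorem / the standard moment-generating-function continuity theorem by exploiting analyticity. The key observation is that if $M_{X_n}(s)$ is finite for $s$ in an interval, then $M_{X_n}$ extends to an analytic function on a strip in $\CC$ containing that interval, and the moments of $X_n$ are recoverable from the derivatives of $M_{X_n}$ at any interior point of $I$.

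\medskip

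\emph{Step 1: Uniform boundedness and tightness.} First I would fix a point $s_0 \in I$ and a small closed subinterval $[s_0-\delta, s_0+\delta] \subseteq I$. Since $M_{X_n} \to M_X$ pointwise there and $M_X$ is finite (being the mgf of an honest distribution $X$), the sequence $M_{X_n}(s_0 \pm \delta)$ is bounded, say by some constant $C$. By convexity of $s \mapsto M_{X_n}(s) = \ee[e^{sX_n}]$ (H\"older's inequality), $M_{X_n}$ is bounded by $C$ on the whole interval $[s_0-\delta, s_0+\delta]$, uniformly in $n$. In particular $\ee[e^{\delta|X_n - s_0\,\mathrm{sgn}(X_n)}] \le \ee[e^{\delta X_n}] + \ee[e^{-\delta X_n}] \le 2C e^{|s_0|\cdot(\dots)}$ — more cleanly: $\ee[e^{\delta|X_n|}] \le \ee[e^{\delta X_n}] + \ee[e^{-\delta X_n}]$ is bounded, so by Markov's inequality $\pp(|X_n| > R) \le 2C e^{-\delta R}$, which is tightness of $\{X_n\}$ uniform in $n$.

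\medskip

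\emph{Step 2: Extract a subsequential limit and identify it.} By tightness (Prokhorov), every subsequence of $\{X_n\}$ has a further subsequence converging in distribution to some random variable $Y$. I claim $Y \stackrel{d}{=} X$, which by a standard subsequence argument forces $X_n \Rightarrow X$ and finishes the proof. To identify $Y$: the uniform exponential tail bound from Step~1 passes to the limit, so $Y$ also satisfies $\ee[e^{\delta|Y|}] \le 2C$, hence $M_Y(s)$ is finite and analytic on the strip $\{|\mathrm{Re}(s) - s_0| < \delta\}$ (in fact on an open strip containing $[s_0-\delta,s_0+\delta]$), and similarly for each $X_n$ and for $X$ itself on an interval around $s_0$. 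Moreover, the uniform integrability furnished by the tail bound lets us upgrade convergence in distribution along the subsequence to convergence of mgf's: $M_{X_{n_j}}(s) \to M_Y(s)$ for real $s$ near $s_0$. But $M_{X_{n_j}}(s) \to M_X(s)$ by hypothesis on all of $I \ni s_0$. Hence $M_Y(s) = M_X(s)$ for all $s$ in an open real interval around $s_0$.

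\medskip

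\emph{Step 3: From equality of mgf's on an interval to equality in law.} Both $M_Y$ and $M_X$ are analytic on an open strip in $\CC$ around $s_0$ and agree on a real subinterval, hence agree on the whole strip; in particular they agree on a neighborhood of $0$ if $0$ happens to be covered, but we cannot assume that. Instead, differentiate $M_Y = M_X$ repeatedly at the interior point $s_0$: this gives $\ee[X^m e^{s_0 X}] = \ee[Y^m e^{s_0 Y}]$ for all $m \ge 0$, i.e. the tilted measures $d\tilde{\mu}_X \propto e^{s_0 x}\,d\mu_X$ and $d\tilde{\mu}_Y \propto e^{s_0 x}\,d\mu_Y$ have the same moments. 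Since these tilted measures still have exponentially decaying tails (again from the bound in Step~1), they are determined by their moments, so $\tilde{\mu}_X = \tilde{\mu}_Y$; undoing the exponential tilt (which is an invertible reweighting) gives $\mu_X = \mu_Y$, i.e. $X \stackrel{d}{=} Y$.

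\medskip

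\textbf{Main obstacle.} The one genuinely delicate point is Step~3 — justifying that equality of mgf's on an interval \emph{not} containing $0$ still pins down the distribution. Simply invoking analytic continuation is not enough by itself, since one then needs the moment problem to be determinate; the saving grace is that the uniform exponential tail bound from Step~1 guarantees determinacy (via, e.g., Carleman's condition, which is automatic when moments grow no faster than geometrically times a factorial). An alternative route for Step~3, possibly cleaner to write, is to push the argument into the complex domain: the analytic extensions agree on the strip, and one can then translate to a line $\mathrm{Re}(s) = 0$ if the strip reaches it, recovering characteristic functions; if the strip does not reach the imaginary axis, the exponential-tilt argument above is the way to go. I would present the tilt argument, citing \cite{KL} for the precise form and noting that it is exactly the modified Curtiss statement used in that reference.
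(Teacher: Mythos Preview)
The paper does not actually prove Lemma~\ref{lem:KL}; it is quoted from \cite[Proposition~2.2]{KL} as a black box, so there is no in-paper argument to compare your proposal against.

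Your outline has the right shape, but the genuine obstacle is in Step~1, not Step~3. From the bound on $M_{X_n}$ over $[s_0-\delta,\,s_0+\delta]\subseteq I$ you jump to ``$\ee[e^{\delta|X_n|}] \le \ee[e^{\delta X_n}]+\ee[e^{-\delta X_n}]$ is bounded,'' i.e.\ to a bound on $M_{X_n}(\pm\delta)$. That only follows when $0\in I$, whereas the lemma is stated---and applied in the paper, with $I=(0,\infty)$---for intervals that need not contain the origin. If $I\subset(0,\infty)$ your hypotheses control only the right tail of $X_n$; nothing you wrote bounds $M_{X_n}(-\delta)$ or $\pp(X_n<-R)$, so tightness is not established and Steps~2--3 cannot get started. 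The clean repair is to run the exponential tilt from your Step~3 \emph{first}: the tilted laws $d\nu_n \propto e^{s_0 x}\,d\mu_{X_n}$ have moment generating function $M_{\nu_n}(t)=M_{X_n}(s_0+t)/M_{X_n}(s_0)$, which converges to $M_{\nu}(t)$ on a genuine neighborhood of $0$, so the classical Curtiss theorem gives $\nu_n\Rightarrow\nu$ directly. Then un-tilt, using the extra piece of information $\int e^{-s_0 x}\,d\nu_n = 1/M_{X_n}(s_0)\to 1/M_X(s_0)$ to supply the uniform integrability needed to pass from weak convergence of $\nu_n$ to weak convergence of $\mu_{X_n}$. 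With this reorganization your Step~3 identification becomes automatic.
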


We can express the moment generating function for the \pmE{} numbers $\el{n}{k}^{\pm}$ explicitly. To be precise, we introduce a random variable $W_n^{\pm}$ which takes values in $\{1,\cdots,n\}$ with probabilities
\[
    \mathbb{P}(W_{n}^{\pm} = k) = \frac{1}{|S_n^{\pm}|}\el{n}{k}^{\pm}.
\]
Note that $W_n^{\pm}$ is distributed as the descent plus one of a random permutation drawn uniformly at random from $S_n^{\pm}$. Then, from Theorem \ref{thm:main}, the moment generating function of~$W_n^{\pm}$ takes the form
\[
    \mathbb{E}[\exp\{s W_{n}^{\pm}\}]
    = \frac{A_n^{\pm}\left( e^s \right)}{n!/2}
    = \frac{\left( 1 - e^s \right)^{n+1}}{n!} \sum_{k=1}^\infty \left( k^n \pm k^{\lceil n/2 \rceil} \right) e^{ks}.
\]
Our task now shifts to estimating this function well enough to show that for all fixed $s$ in an appropriately chosen interval $I$, the normalized moment generating function converges as $n\to \infty$ to a moment generating function $M_X(s)$ for a normal distribution $X$ with the desired mean and variance.

\begin{proof}[First proof of Theorem \ref{thm:clt}]
We argue for positive Eulerian numbers; the argument for negative Eulerian numbers is almost identical. Fix a real number $s>0$.

The asymptotic normality of $W_n^{+}$ translates to the claim that the following normalized random variable
\[
    Z_{n}^{+} = \frac{1}{\sqrt{n+1}}\left( W_{n}^{+} - \frac{n+1}{2}\right)
\]
converges in distribution to the normal distribution with zero mean and variance $\frac{1}{12}$. In view of Lemma 3.1 and the subsequent remark, it is sufficient to prove that the Laplace transform $\mathbb{E}[\exp\{-s Z_{n}^{+}\}]$ converges to $\exp\{\frac{s^2}{24}\}$ as $n\to\infty$ for each $s$ in the interval $I = (0, \infty)$. We henceforth write $M_{n}^{+}(s) = \mathbb{E}[\exp\{-s Z_{n}^{+}\}]$. Then
\[
    \begin{aligned}
        M_n^+(s)
        &= e^{\frac{s}{2}\sqrt{n+1}} \, \mathbb{E}\big[ \exp\left\{-sW_n^{+}/\smash{\sqrt{n+1}}\right\} \big] \\
        &= \frac{e^{\frac{s}{2}\sqrt{n+1}} \big( 1 - e^{-s/\sqrt{n+1}} \big)^{n+1}}{n!} \sum_{k=1}^\infty \left( k^n + k^{\lceil n/2 \rceil} \right) e^{-sk/\sqrt{n+1}}.
    \end{aligned}
\]
Plugging $x = \frac{s}{\sqrt{n+1}}$ in to the approximation
\[
    \frac{1 - e^{-x}}{x}
    = \exp\left\{ - \frac{x}{2} + \log \left( \frac{\sinh(x/2)}{x/2} \right) \right\}
    = \exp \left\{ -\frac{x}{2} + \frac{x^2}{24} + \mathcal{O}(x^3) \right\},
\]
we see that the prefactor of $M_n^+(s)$ equals
\[
    \frac{e^{\frac{s}{2}\sqrt{n+1}} \big( 1 - e^{-s/\sqrt{n+1}} \big)^{n+1}}{n!}
    = e^{\frac{s^2}{24} + \mathcal{O}( n^{-1/2} )} \frac{1}{n!} \left( \frac{s}{\sqrt{n+1}} \right)^{n+1}.
\]
In terms of $M_n^{+}(s)$ itself,
\[
	M_n^+(s) = e^{\frac{s^2}{24} + \mathcal{O}( n^{-1/2} )} \frac{1}{n!} \left( \frac{s}{\sqrt{n+1}} \right)^{n+1} \sum_{k=1}^\infty \left( k^n + k^{\lceil n/2 \rceil} \right)e^{-sk/\sqrt{n+1}}.
\]

Given this representation, we estimate the summation part via approximation to integrals. Indeed, for $t \in (0,1)$ and $a \geq 1$, we have
\[
	\int_{a-1}^{a} x^n t^{x+1} \, dx \leq a^n t^a \leq \int_a^{a+1} x^n t^{x-1} \, dx.
\]
Summing these over all $a \in \{1, 2, \cdots\}$, we obtain
\[
	t \int_{0}^{\infty} x^n t^x \, dx \leq \sum_{a=1}^\infty a^n t^a \leq \int_{1}^\infty x^n t^{x-1} \, dx \leq \frac{1}{t} \int_0^\infty x^n t^x \, dx.
\]
Since $s>0$, we have $0<e^{-s/\sqrt{n}}<1$, and setting $t = e^{-s/\sqrt{n+1}}$, we find that both $t$ and $\frac{1}{t}$ have asymptotic form $1 + \mathcal{O}(n^{-1/2})$. Thus we can replace the sum with an integral and our estimate now becomes
\[
	M_n^+(s) = e^{\frac{s^2}{24} + \mathcal{O}( n^{-1/2} )} \frac{1}{n!} \left( \frac{s}{\sqrt{n+1}} \right)^{n+1}  \int_{0}^{\infty} \left( x^n + x^{\lceil n/2 \rceil} \right) e^{-sx/\sqrt{n+1}} \, dx.
\]
Making the substitution $u = sx/\sqrt{n+1}$ and using some easy comparisons, we find
\begin{align*}
	M_n^+(s)
	&= e^{\frac{s^2}{24} + \mathcal{O}( n^{-1/2} )} \frac{1}{n!} \int_{0}^{\infty} \bigg( u^n + \left( \frac{s}{\sqrt{n+1}} \right)^{\lfloor n/2 \rfloor} u^{\lceil n/2 \rceil} \bigg) e^{-u} \, du, \\
	&= e^{\frac{s^2}{24} + \mathcal{O}( n^{-1/2} )} \frac{1}{n!} \bigg( n! + \left( \frac{s}{\sqrt{n}} \right)^{\lfloor n/2 \rfloor} \left( \left\lceil \frac{n}{2} \right\rceil \right)! \bigg), \\
	&= e^{\frac{s^2}{24} + \mathcal{O}( n^{-1/2} )}
\end{align*}
Therefore, the convergence $M_n^{+}(s) \to \exp\{ \frac{s^2}{24} \}$ holds for all $s$ in the interval $I = (0,\infty)$, and we are done.
\end{proof}

Our second proof relies on the following result about moments of the \pmE{} distributions. We note that Section 9.3 of \cite{HCD} also proves Theorem \ref{thm:clt} by moments.

\begin{prop} \label{prop:mommatch}
Let $r$ be any positive integer. Then for $\lfloor n/2 \rfloor > r$, the $r$th moment of the \pmE{} distribution equals the $r$th moment of the Eulerian distribution.
\end{prop}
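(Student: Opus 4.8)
The plan is to reduce the claim to a single fact about the order of vanishing of a polynomial at $t=1$. Recall that if $Y$ is a random variable supported on $\{1,\dots,n\}$ with probability generating polynomial $f(t)=\sum_k \mathbb{P}(Y=k)\,t^k$, then, writing $D = t\,\frac{d}{dt}$, the $r$th moment is $\mathbb{E}[Y^r] = \left[D^r f(t)\right]_{t=1}$. Taking $f(t) = A_n(t)/n!$ for the Eulerian distribution and $f(t) = A_n^{\pm}(t)/(n!/2) = 2A_n^{\pm}(t)/n!$ for the \pmE{} distribution, the two $r$th moments are $\frac{1}{n!}\bigl[D^r A_n(t)\bigr]_{t=1}$ and $\frac{1}{n!}\bigl[D^r\bigl(2A_n^{\pm}(t)\bigr)\bigr]_{t=1}$.

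First I would put $2A_n^{\pm}(t)$ into a form that isolates the ordinary Eulerian polynomial. From Theorem \ref{thm:main} and the classical identity \eqref{eq:seriesA} (applied with exponents $n$ and $\lceil n/2\rceil$), one gets
\[
 2A_n^{\pm}(t) = A_n(t) \pm (1-t)^{\lfloor n/2\rfloor}\, A_{\lceil n/2\rceil}(t),
\]
which is also just \eqref{eq:ApmA} split into the cases $n=2m$ and $n=2m+1$. Consequently the difference of the two $r$th moments equals
\[
 \pm\,\frac{1}{n!}\,\Bigl[D^r\Bigl((1-t)^{\lfloor n/2\rfloor}\, A_{\lceil n/2\rceil}(t)\Bigr)\Bigr]_{t=1},
\]
so the proposition is equivalent to the assertion that this expression vanishes whenever $\lfloor n/2\rfloor > r$.

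The key step is the elementary observation that $D = t\,\frac{d}{dt}$ lowers the order of vanishing at $t=1$ by at most one: if $g(t) = (1-t)^m h(t)$ with $h$ a polynomial and $m\ge 1$, then
\[
 D g(t) = t(1-t)^{m-1}\bigl(-m\,h(t) + (1-t)h'(t)\bigr),
\]
which is still divisible by $(1-t)^{m-1}$. Since $(1-t)^{\lfloor n/2\rfloor} A_{\lceil n/2\rceil}(t)$ is divisible by $(1-t)^{\lfloor n/2\rfloor}$, applying $D$ a total of $r$ times leaves a polynomial divisible by $(1-t)^{\lfloor n/2\rfloor - r}$; when $\lfloor n/2\rfloor > r$ this exponent is at least $1$, so the result vanishes at $t=1$, which finishes the proof. (Note that $\lfloor n/2\rfloor > r$ also gives $\lfloor n/2\rfloor > r'$ for every $r'\le r$, so in fact all moments of order $\le r$ agree, which is the form one feeds into the method of moments.)

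I do not expect a genuine obstacle. The only points needing care are the bookkeeping in deriving the clean identity $2A_n^{\pm}(t) = A_n(t) \pm (1-t)^{\lfloor n/2\rfloor}A_{\lceil n/2\rceil}(t)$ from \eqref{eq:ApmA} (equivalently from Theorem \ref{thm:main}), and fixing the convention that ``$r$th moment'' means the raw $r$th moment of $\des+1$; with that settled, the order-of-vanishing lemma does everything. If one prefers central or factorial moments, the same conclusion follows, since those are polynomial combinations of the raw moments of orders $\le r$, all of which coincide under the hypothesis.
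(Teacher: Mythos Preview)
Your proof is correct and follows essentially the same approach as the paper: both use the identity $2A_n^{\pm}(t) = A_n(t) \pm (1-t)^{\lfloor n/2\rfloor}A_{\lceil n/2\rceil}(t)$ and then argue that the $(1-t)^{\lfloor n/2\rfloor}$ factor forces the difference of moments to vanish after $r$ differentiations when $\lfloor n/2\rfloor > r$. The only cosmetic difference is that the paper works with falling moments via ordinary differentiation $\frac{d}{dt}$, whereas you work with raw moments via $D = t\,\frac{d}{dt}$; you already note at the end that these are interchangeable here.
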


\begin{proof}
Instead of working with the $r$th moment, we can work with the $r$th falling moment,
which can be computed from the relevant generating function by differentiating (with
respect to $t$) $r$ times and then setting $t=1$.

 Restating Equation \eqref{eq:ApmA}, we have
\[
A_n^{\pm}(t)  = \frac{1}{2} A_n(t) \pm \frac{1}{2} (1-t)^{\lfloor n/2 \rfloor} A_{\lceil n/2 \rceil}(t).
\]
 Now observe that if $\lfloor n/2 \rfloor > r$, then differentiating
 \[(1-t)^{\lfloor n/2 \rfloor}\]
 $r$ times and setting $t=1$ gives $0$. And by the product rule, differentiating the expression
 \[(1-t)^{\lfloor n/2 \rfloor} A_{\lceil n/2 \rceil}(t)\]
 $r$ times and setting $t=1$ also gives $0$. The theorem follows.
\end{proof}

It is a well-known fact that the Eulerian numbers $\el{n}{k}$ are asymptotically normal with mean $(n+1)/2$ and variance $(n+1)/12$. See, e.g., Bender \cite{B}. Then by taking $r=1$ in Proposition \ref{prop:mommatch}, we find that for $n \geq 4$, averaging the number of descents over positive signed permutations gives $(n-1)/2$, and that averaging the number of descents over negative signed permutations also gives $(n-1)/2$. Likewise, if $r=2$, we find in both cases a variance of $(n+1)/12$ when $n\geq 6$. This gives our second proof of Theorem \ref{thm:clt}.

\begin{proof}[Second proof of Theorem \ref{thm:clt}]
This is immediate from the method of moments, Proposition \ref{prop:mommatch}, and the fact that the Eulerian distribution is asymptotically normal with the claimed mean and variance.
\end{proof}

We close this section by mentioning that if the polynomials $A_n^{\pm}(t)$ have only real roots, i.e., if Conjecture \ref{conj:real} holds, then Harper's method would give a third proof of Theorem \ref{thm:clt}.

\section{Shuffling and sign}\label{sec:shuffle}

The mathematics of card shuffling is a lively topic; see \cite{Dsurv} for a nice survey. In this section we study the sign of a permutation generated by the Gilbert-Shannon-Reeds (GSR) model of riffle shuffling. We derive a simple and striking exact formula for the chance of a given sign after a riffle shuffle, and this formula
implies that (for large $n$), one shuffle suffices to randomize the sign of a permutation.

To begin we give some background on the GSR model of riffle shuffling, definitively
studied in a lovely paper of Bayer and Diaconis \cite{BD}, to which we refer the
reader for further background.

An \emph{$a$-shuffle} is defined as follows. Choose integers $j_1,\ldots,j_a$ according to the multinomial distribution,
\[ P(j_1,\cdots,j_a) = \binom{n}{ j_1, \ldots, j_a} / a^n .\]
Thus $0 \leq j_i \leq n$, $\sum_{i=1}^a j_i=n$, and the $j_i$ have the same distribution as the number of balls in each box $i$ if $n$ balls are dropped at random into $a$ boxes.

Given the $j_i$, cut off the top $j_1$ cards, the next $j_2$ cards and so on,
producing $a$ packets (some possibly empty). Then drop cards one at a time, according to the rule that if there are $A_j$ cards in packet $j$, the next card is dropped from packet $i$ with probability $A_i/(A_1+\cdots+A_a)$. This is done until all cards have been dropped.

When $a=2$ this is a realistic model for how people shuffle cards. It turns out that an $a_1$-shuffle followed by an $a_2$-shuffle is equivalent to an $(a_1 a_2)$-shuffle. Thus $k$ iterations of a $2$-shuffle is equivalent to a single $2^k$-shuffle.

In what follows, we let $P_{n,a}(w)$ denote the probability of a permutation $w$ after an $a$-shuffle started from the identity. In fact there is an explicit formula for this quantity, derived in \cite{BD}:
\begin{equation} \label{eq:shufform}
P_{n,a}(w) = {n+a - \des(w^{-1}) - 1 \choose n}/a^n.
\end{equation}
In particular, Equation \eqref{eq:shufform} connects riffle shuffling with descents, showing that this probability depends only on the number of descents of the inverse permutation.

The main purpose of this section is to prove Theorem \ref{thm:shuffle}, which establishes a formula for the probability of having a permutation with sign $1$ after $k$ iterations of $a$-shuffling starting from the identity permutation $123\cdots n$. Letting $P_{n,a^k}^+$ denote this probability (and $P_{n,a^k}^-$ its complementary probability), we will show
\begin{equation}\label{eq:shuffleprob}
P_{n,a^k}^+= \frac{1}{2} + \frac{1}{2a^{k \lfloor n/2 \rfloor}} .
\end{equation}

We will give two proofs of this fact, and sketch a third using unpublished work of Amy Pang. One of these again uses the power series identity in Lemma \ref{lem:usesim} as we did in Section \ref{sec:mainproof}. The other builds off of the identity in \eqref{eq:seriesApm} from Theorem \ref{thm:main}.

To get the first proof started, we recall the following series identity from Diaconis, McGrath, and Pitman \cite[Proposition 5.6]{DMP}:
\begin{equation} \label{eq:DMP}
1 + \sum_{n \geq 1} u^n \sum_{w \in S_n} P_{n,a}(w) \prod_{i \geq 1} x_i^{n_i(w)}
= \prod_{j \geq 1} (1-u^jx_j/a^j)^{-f_{j,a}},
\end{equation}
which is very similar to Fulman's identity \eqref{eq:F1} we used in Section \ref{sec:mainproof}. As before the quantity $n_i(w)$ counts the number of $i$-cycles of $w$ and $f_{j,a}$ counts the number of primitive necklaces of length $j$ on an alphabet of $a$ letters. Also as in Section \ref{sec:mainproof}, we recall that $\sgn(w) = (-1)^{n+c(w)}$, so that replacing $u$ with $-u$ and setting the $x_i$ equal to $(-1)$ yields:
\begin{equation}\label{eq:DMPspecial}
1+ \sum_{n \geq 1} u^n \sum_{w \in S_n} P_{n,a}(w) \sgn(w)
 = \prod_{j \geq 1} (1+(-u)^j/a^j)^{-f_{j,a}},
\end{equation}
which is our jumping off point for what follows.

\begin{proof}[First proof of Theorem \ref{thm:shuffle}]
The coefficient of $u^n$ on the left hand side of Equation \eqref{eq:DMPspecial} is
\[
 \sum_{w \in S_n^+} P_{n,a}(w) - \sum_{w' \in S_n^-} P_{n,a}(w') = P_{n,a}^+-P_{n,a}^-,
\]
i.e., the difference in probabilities between positively and negatively signed permutations.

By application of Lemma \ref{lem:usesim} as in the first proof of Theorem \ref{thm:main}, we can show the right hand side of Equation \eqref{eq:DMPspecial} is equal to
\[ \frac{1+u}{1-u^2/a},\]
and taking the coefficient of $u^n$ in this series gives $1/a^{\lfloor n/2 \rfloor}$.

We have shown that
\[ P_{n,a}^+ - P_{n,a}^- = \frac{1}{a^{\lfloor n/2 \rfloor}} .\]
Since $P_{n,a}^+ + P_{n,a}^- = 1$, it follows that
\[ P_{n,a}^+ = \frac{1}{2} + \frac{1}{2a^{\lfloor n/2 \rfloor}}.\]
Since $k$ $a$-shuffles is the same as one $a^k$ shuffle, the result follows.
\end{proof}

Our second proof builds from the identity in Theorem \ref{thm:main}.

\begin{proof}[Second proof of Theorem \ref{thm:shuffle}]
Equation \eqref{eq:seriesApm} gives
\[
\frac{1}{(1-t)^{n+1}} \sum_{w \in S_n^+} t^{\des(w)+1} =
\sum_{a \geq 1} \frac{a^n + a^{\lceil n/2 \rceil}}{2} t^a .
\]
Taking the coefficient of $t^a$ on both sides gives
\[
\sum_{w \in S_n^+ } \binom{n+a-\des(w)-1}{ n} = \frac{a^n + a^{\lceil n/2 \rceil}}{2} .
\]
Since $\sgn(w)=\sgn(w^{-1})$, therefore,
\[
\sum_{w \in S_n^+} {n+a-\des(w^{-1})-1 \choose n} = \frac{a^n + a^{\lceil n/2 \rceil}}{2} .
\]
Hence by the formula for $P_{n,a}(w)$ in Equation \eqref{eq:shufform}, we have
\[
\sum_{w \in S_n^+} P_{n,a}(w) = \frac{1}{a^n} \left[  \frac{a^n + a^{\lceil n/2 \rceil}}{2}  \right]
= \frac{1}{2} + \frac{1}{2a^{\lfloor n/2 \rfloor}},\]
as desired.
\end{proof}

We now sketch a third proof of Theorem \ref{thm:shuffle}. As we learned from Diaconis,
this is a simple consequence of
a result of Amy Pang (unpublished and prior to our work) stating that $f(w)=\sgn(w)$ is a right eigenfunction of the $a$-shuffle Markov chain with eigenvalue $a^{- \lfloor n/2 \rfloor}$.
The proof of Pang's result uses a heavy dose of Hopf algebras, along the lines of \cite{DPR}.

\begin{proof}[Third proof of Theorem \ref{thm:shuffle}]
Letting $P$ denote the transition operator of the riffle shuffling Markov chain, one has that $P$ acts on functions $f$ by $P(f)[x] = \sum_{y\in S_n} P(x,y) f(y)$, and more generally,
\[ P^k(f)[x]= \sum_{y\in S_n} P^k(x,y) f(y).\]

Letting $\id$ denote the identity permutation, and taking $f$ to be the sign function, since $f=\sgn$ is a right eigenfunction with eigenvalue $a^{- \lfloor n/2 \rfloor}$, it follows that
\[
P^k \sgn [\id] = \frac{\sgn(\id)}{a^{k \lfloor n/2 \rfloor}} = \frac{1}{a^{k \lfloor n/2 \rfloor}}.
\]
On the other hand,
\[
P^k \sgn [\id] = \sum_{y \in S_n} P^k(\id,y) \sgn(y),
\]
which is simply the chance of sign $1$ after $k$ steps minus the chance of sign $-1$ after $k$ steps. So this difference is equal to $1/(a^{k \lfloor n/2 \rfloor})$, and the result follows as in the first proof.
\end{proof}

We finish this section with two further remarks on shuffling and sign.

\begin{rem}[Time to randomness]
It is well known \cite{BD} that order $\frac{3}{2} \log_2(n)$ many $2$-shuffles are necessary and sufficient to mix a deck of $n$ cards. However if one is only interested in certain features of the deck (for instance the number of fixed points of a permutation), then randomness can occur much earlier. Theorem \ref{thm:shuffle} shows that (for large $n$), the sign of a permutation becomes random after one step.
\end{rem}

\begin{rem}[Shelf-shuffling machines and sign]
Another model for card shuffling is the shelf-shuffling machine as studied in \cite{DFH}. These machines are used in casinos with $m=10$. One of the main findings of \cite{DFH} is that a single use of a shelf-shuffler does {\it not} adequately mix $52$ cards, but that two iterations {\it do} adequately mix $52$ cards. One may wonder what effect shelf-shuffling machines have on the sign of a permutation. It turns out that the probability that $\sgn(w)=1$ after one pass through a shelf-shuffling machine is \emph{exactly} 1/2 for all $n\geq 2$.

This follows because, as found in \cite{DFH}, the probability of obtaining the permutation $w \in S_n$ when using a shelf shuffler with $m$ shelves
and an $n$ card deck is
\[
\frac{4^{\val(w)+1}}{2 (2m)^n} \sum_{a=0}^{m-1} {n+m-a-1 \choose n}{n-1-2\val(w) \choose a-\val(w)},
\]
where $\val(w)$ is the number of \emph{valleys} $w(i-1)>w(i)<w(i+1)$. That is, the probability of reaching a particular permutation depends only on the number of valleys in that permutation.

However, for any number of valleys, say $k$, we have an equal number of positively and negatively signed permutations with that many valleys, i.e.,
\[
 |\{ w \in S_n^+ : \val(w) = k \}| = |\{ w' \in S_n^- : \val(w') = k \}|.
\]
This is easily seen under the involution that swaps $n$ and $n-1$; neither of these elements can be in a valley and swapping them only changes the number of inversions by one. For example, $w =34812765  \leftrightarrow 34712865=w'$.
\end{rem}

\section{Analogous results for the hyperoctahedral group}\label{sec:Bn}

We now present our results for the hyperoctahedral group. For more background on the combinatorics of Coxeter groups, see \cite{BjBr} or \cite[Part III]{Pet}.

First, we define elements of the hyperoctahedral group $B_n$ to be permutations of $[\pm n] = \{-n,\ldots,-1,0,1,\ldots,n\}$ with signed symmetry. That is, $w \in B_n$ if and only if $w$ is a bijection $[\pm n] \to [\pm n]$ such that $w(-i) = -w(i)$ for all $i$. This symmetry requirement means such a permutation is determined by the entries $w(1), \ldots, w(n)$, and we write elements in one-line notation as $w = w(1)\cdots w(n)$, using bars to indicate negative entries. For example, $w = \bar{1} 3 2 \in B_3$ is the bijection given by $w(-3)=-2$, $w(-2)=-3$, $w(-1)=1$, $w(0)=0$, $w(1)=-1$, $w(2)=3$, and $w(3)=2$.

To any permutation $w \in S_n$ we can associate $2^n$ elements of $B_n$ by negating any subset of the entries $w(1),\ldots,w(n)$, and so there are $2^n n!$ elements of $B_n$, commonly, if somewhat confusingly for this context, called \emph{signed permutations}. This notion of decorating ordinary permutations with bars is important for us. So for any element $u \in S_n$ and any $J \subseteq [n]$, let $u^J=w \in B_n$ denote the element given by
\[
 w(j) = \begin{cases} u(j) & j \notin J \\ -u(j) & j \in J \end{cases}
\]
For example, $134652^{\{1,4,5\}} = \bar 1 34 \bar 6 \bar 5 2$.

The definitions of descent and inversion number for elements of the hyperoctahedral group are motivated by the theory of Coxeter groups as discussed in \cite{BjBr} and \cite[Part III]{Pet}. For any $w \in B_n$, we say $i$ is a \emph{type B descent} if $w(i) > w(i+1)$, for $i=0,1,\ldots,n-1$. We let $\des_B(w)$ denote the number of type B descents:
\[\des_B(w) = |\{ 0\leq i \leq n-1 : w(i) > w(i+1)\}|.\]
For example, $\des_B(\bar{1} 3 2) = 2$, because $w(0)>w(1)$ and $w(2)>w(3)$.

\begin{rem}
We mention that Reiner's papers \cite{Rei}, \cite{Rei2} use a different notion of descents. Using the ordering
\[ 1 < 2 < \cdots < n < -n < \cdots < -2 < -1, \]
he says that $w$ has a descent at position $i$ ($1 \leq i \leq n-1$) if $w(i) > w(i+1)$ and has a descent at position $n$ if $w(n)<0$. While these definitions are different, it is straightforward to check that letting $w_0 = n(n-1) \cdots 1$, then a signed permutation $w \in B_n$ has $d$ descents with Reiner's definition if and only if $w_0 w w_0$ has $d$ descents with our definition. Moreover, since $w_0$ is an involution, replacing $w$ by $w_0 w w_0$ leaves the signed cycle type (defined below) of $w$ unchanged.
\end{rem}

The type B Eulerian numbers are denoted $\el{B_n}{k}$, $1\leq k \leq n$, and they count the number of $w\in B_n$ with $k$ descents, i.e.,
\[
 \el{B_n}{k} = |\{ w \in B_n : \des_B(w) = k\}|.
\]
The first few rows of the type B Eulerian triangle are shown in Table \ref{tab:Beul}.

The type B \emph{Eulerian polynomials} are the generating functions for the rows of this triangle, i.e., the generating function for signed permutations according to the type B descent statistic:
\[
 B_n(t) = \sum_{k=0}^n \el{B_n}{k} t^k =  \sum_{ w \in B_n} t^{\des_B(w)}.
\]
For example, we have $B_2(t) = 1+6t + t^2$ and $B_3(t) = 1 + 23t + 23t^2 + t^3$.

\begin{table}[t]
\begin{center}
\begin{tabular}{c |rrrrrrrr}
$n\backslash k$ & 0 & 1 & 2 & 3 & 4 &  5 & 6 \\
\hline
1 & 1 & 1 \\
2 & 1 & 6 & 1 \\
3 & 1 & 23 & 23 & 1 \\
4 & 1 & 76 & 230 & 76 & 1 \\
5 & 1 & 237 & 1682 & 1682 & 237 & 1  \\
6 & 1 & 722 & 10543 & 23548 & 10543 & 722 & 1\\
\end{tabular}
\end{center}
\caption{Triangle of the type B Eulerian numbers $\el{B_n}{k}$, the number of elements in $B_n$ with $k$ descents.}\label{tab:Beul}
\end{table}

The \emph{sign} of an element $w \in B_n$ is again motivated by the theory of Coxeter groups, but from a combinatorial standpoint it can be phrased very simply in terms of the number of negative entries of $w$ and the sign of the underlying unsigned permutation. That is, if $w = u^J$ for $u \in S_n$ and $J \subseteq [n]$,
\begin{equation}\label{eq:sgnB}
 \sgn_B(w) = (-1)^{|J|}\sgn(u).
\end{equation}
For example, $u=3412 \in S_4$ has sign $+1$, so $w = \bar 3 4 \bar 1 \bar 2$ has sign $(-1)^3\cdot 1 = -1$. As with the symmetric group, we denote the sets of elements with positive and negative sign by $B_n^+$ and $B_n^-$, respectively.

We now define the type B variants of the positive and negative Eulerian numbers:
\[
 \el{B_n}{k}^+ = |\{w \in B_n^+ : \des_B(w) = k\}| \quad \mbox{and} \quad \el{B_n}{k}^- = |\{w \in B_n^- : \des_B(w) = k\}|.
\]
The first few rows of the type B positive and negative Eulerian numbers are shown in Tables \ref{tab:Bposeul} and \ref{tab:Bnegeul}.

\begin{table}[t]
\begin{center}
\begin{tabular}{c |rrrrrrrr}
$n\backslash k$ & 0 & 1 & 2 & 3 & 4 &  5 & 6 \\
\hline
1 & 1 & 0 \\
2 & 1 & 2 & 1 \\
3 & 1 & 10 & 13 & 0\\
4 & 1 & 36 & 118 & 36 & 1 \\
5 & 1 & 116 & 846 & 836 & 121 & 0\\
6 & 1 & 358 & 5279 & 11764 & 5279 & 358 & 1  \\
\end{tabular}
\end{center}
\caption{Triangle of the type B positive Eulerian numbers, $\el{B_n}{k}^+$, the number of elements in $B_n^+$ with $k$ descents.}\label{tab:Bposeul}
\end{table}

\begin{table}[t]
\begin{center}
\begin{tabular}{c |rrrrrrrr}
$n\backslash k$ & 0 & 1 & 2 & 3 & 4 &  5 & 6 \\
\hline
1 & 0 & 1\\
2 & 0 & 4 & 0 \\
3 & 0 & 13 & 10 & 1 \\
4 & 0 & 40 & 112 & 40 & 0\\
5 & 0 & 121 & 836 & 846 & 116 & 1 \\
6 & 0 & 364 & 5264 & 11784 & 5264 & 364 & 0  \\
\end{tabular}
\end{center}
\caption{Triangle of the negative Eulerian numbers, $\el{B_n}{k}^-$, the number of elements in $B_n^-$ with $k$ descents.}\label{tab:Bnegeul}
\end{table}

Similarly, we define the type B \pmE{} polynomials:
\[
 B_n^+(t) = \sum_{k=0}^n \el{B_n}{k}^+ t^k =  \sum_{ w \in B_n^+} t^{\des_B(w)},
\]
and
\[
B_n^-(t) = \sum_{k=0}^n \el{B_n}{k}^- t^k =  \sum_{ w \in B_n^-} t^{\des_B(w)}.
\]
For example, $B_2^+(t) = 1+2t+t^2$, $B_2^-(t) = 4t$, $B_3^+(t) = 1+10t+13t^2$, and $B_3^-(t) = 13t+10t^2 + t^3$.

\subsection{A type B generating function identity}

The analogue of Equation \eqref{eq:seriesA} for the hyperoctahedral group is:
\begin{equation}\label{eq:seriesB}
 \frac{B_n(t)}{(1-t)^{n+1}} = \sum_{k\geq 0} (2k+1)^n t^k.
\end{equation}
See, for example, work of Brenti \cite[Theorem 3.4]{Brenti}.

The analogous identity for the type B \pmE{} polynomials is as follows.

\begin{thm}[Type B generating function identity]\label{thm:Bmain}
 For all $n \geq 1$,
 \begin{equation}\label{eq:seriesBpm}
 \frac{B_n^{\pm}(t)}{(1-t)^{n+1}} = \sum_{k\geq 0} \frac{(2k+1)^n\pm 1}{2} t^k.
 \end{equation}
\end{thm}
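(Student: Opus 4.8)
The plan is to reduce Theorem~\ref{thm:Bmain} to the single sign-refined identity
\[
 B_n^+(t) - B_n^-(t) = \sum_{w\in B_n}\sgn_B(w)\,t^{\des_B(w)} = (1-t)^n ,
\]
which is the type~$B$ analogue of the ``signed Eulerian'' quantity $A_n(-1,t)=A_n^+(t)-A_n^-(t)$ used in the second proof of Theorem~\ref{thm:main}. Once this is known, the theorem follows by the same elementary manipulation: since $B_n^+(t)+B_n^-(t)=B_n(t)$ we get $2B_n^{\pm}(t)=B_n(t)\pm(1-t)^n$, and dividing by $(1-t)^{n+1}$ and using Equation~\eqref{eq:seriesB} gives
\[
 \frac{2B_n^{\pm}(t)}{(1-t)^{n+1}} = \sum_{k\geq 0}(2k+1)^n t^k \pm \sum_{k\geq 0} t^k = \sum_{k\geq 0}\big((2k+1)^n\pm 1\big)t^k ,
\]
which is the assertion after dividing by $2$. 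Note that, agreeably, the answer here is simply $(1-t)^n$, simpler than the polynomial $A_n(-1,t)$ playing this role in type~$A$.

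To establish the sign-refined identity I would group the elements of $B_n$ by their underlying unsigned permutation. Every $w\in B_n$ equals $u^J$ for a unique $u\in S_n$ and $J\subseteq[n]$, with $\sgn_B(u^J)=(-1)^{|J|}\sgn(u)$ by Equation~\eqref{eq:sgnB}. The key point is that $\des_B(u^J)$ is read off the word $0,\varepsilon_1u(1),\ldots,\varepsilon_n u(n)$, where $\varepsilon_i=-1$ if $i\in J$ and $\varepsilon_i=+1$ otherwise: whether position~$0$ is a descent depends only on $\varepsilon_1$, and whether position $i\in\{1,\ldots,n-1\}$ is a descent depends only on $\varepsilon_i,\varepsilon_{i+1}$ and the fixed relative order of $u(i),u(i+1)$. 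Hence the inner sum over subsets factors as a transfer-matrix product,
\[
 \sum_{J\subseteq[n]}(-1)^{|J|}t^{\des_B(u^J)} = v\,T^{(1)}T^{(2)}\cdots T^{(n-1)}\,\mathbf 1 ,
\]
over states $\varepsilon_i\in\{+,-\}$, where $v=(1,-t)$ records position~$0$ and the sign of $\varepsilon_1$, where $\mathbf 1=(1,1)^{\mathsf T}$, and where $T^{(i)}$ depends only on whether $i$ is an ascent or a descent of $u$.

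A short computation gives
\[
 T^{(i)} = \begin{pmatrix} 1 & -t \\ 1 & -t \end{pmatrix} \ \text{if } u(i)<u(i+1), \qquad T^{(i)} = \begin{pmatrix} t & -t \\ 1 & -1 \end{pmatrix} \ \text{if } u(i)>u(i+1),
\]
so each $T^{(i)}$ has rank one; write $T^{(i)}=\mathbf c_i\mathbf r_i$ with $\mathbf r_i\in\{(1,-t),(1,-1)\}$ and $\mathbf c_i\in\{(1,1)^{\mathsf T},(t,1)^{\mathsf T}\}$ according to type. Then $v\,T^{(1)}\cdots T^{(n-1)}\,\mathbf 1$ collapses to the product of scalars $(v\mathbf c_1)(\mathbf r_1\mathbf c_2)\cdots(\mathbf r_{n-2}\mathbf c_{n-1})(\mathbf r_{n-1}\mathbf 1)$, each factor being one of $(1,-t)(1,1)^{\mathsf T}=1-t$, $(1,-1)(t,1)^{\mathsf T}=t-1$, or a ``mixed'' pairing $(1,-t)(t,1)^{\mathsf T}=(1,-1)(1,1)^{\mathsf T}=0$. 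A mixed (hence vanishing) factor appears unless position~$1$ is an ascent of $u$ and every two consecutive positions among $1,\ldots,n-1$ have the same ascent/descent type --- that is, unless $u=\id$ --- in which case every factor equals $1-t$ and the product is $(1-t)^n$. Summing $\sgn(u)$ times this expression over all $u\in S_n$ therefore leaves only the $u=\id$ term, yielding $\sum_{w\in B_n}\sgn_B(w)\,t^{\des_B(w)}=(1-t)^n$.

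I expect the main obstacle to be purely bookkeeping: pinning down the transfer-matrix setup exactly, especially the role of the forced value $w(0)=0$ and keeping the sign $(-1)^{|J|}$ attached to the correct state at the correct step, so that the rank-one collapse is clean. Two alternative routes are worth noting. One mirrors the first proof of Theorem~\ref{thm:main}: start from a type~$B$ analogue of Fulman's identity~\eqref{eq:F1} for the joint distribution of $\des_B$ and signed cycle type, specialize the cycle-type variables so that the tracked quantity becomes $\sgn_B$, and collapse the resulting infinite product with Lemma~\ref{lem:usesim}; there the work shifts to locating (or re-deriving) the correct type~$B$ cycle-structure generating function. The other is a direct sign-reversing involution on $B_n$, in the spirit of Wachs's proof of Equation~\eqref{eq:desar}, pairing off $w$ and some $w'$ with $\des_B(w')=\des_B(w)$ but opposite sign, leaving unpaired only the $2^n$ elements $\id^J$ (for which one checks $\des_B(\id^J)=|J|$, so their signed contributions sum to $\sum_{J}(-t)^{|J|}=(1-t)^n$).
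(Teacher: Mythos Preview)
Your reduction is exactly the paper's second proof: once one knows $B_n^+(t)-B_n^-(t)=(1-t)^n$, adding and subtracting $B_n(t)$ and invoking Equation~\eqref{eq:seriesB} gives the result. The paper, however, does not prove the identity $\sum_{w\in B_n}\sgn_B(w)\,t^{\des_B(w)}=(1-t)^n$; it cites it from Tan and Reiner (the latter via a sign-reversing involution, which you mention as your third alternative). Your transfer-matrix argument is a genuinely new and self-contained proof of that lemma: the rank-one factorizations of the two $2\times 2$ step matrices are correct, and the vanishing of every ``mixed'' scalar factor cleanly forces $u=\id$, leaving exactly $(1-t)^n$. The paper's first proof is precisely your first alternative route, specializing Reiner's type~$B$ cycle-structure identity (Theorem~\ref{vics}) to collapse the product side to $\sum_{k\geq 1} t^k/(1-u)$. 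So your main line is correct and matches the paper's second proof at the level of the reduction, while supplying an independent combinatorial proof of the key input that the paper imports from the literature; the trade-off is that your argument is fully self-contained but more computational, whereas the paper's version is a two-line deduction from a known fact.
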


As in the symmetric group case, our first proof of Theorem \ref{thm:Bmain} uses the joint distribution of descents and (signed) cycle type, in this case relying on a result of Reiner \cite{Rei2}.
Given a cycle
\[
\left( \begin{array}{ccccc}
j_1 & j_2 & \cdots & j_{i-1} & j_i\\
\epsilon_1 j_2 & \epsilon_2 j_3 & \cdots & \epsilon_{i-1} j_i & \epsilon_{i} j_1
\end{array} \right),
\]
where $\epsilon_i = \pm 1$ and $j_i > 0$, we say that $C$ is a positive cycle (of size $i$) if $\epsilon_1 \epsilon_2 \cdots \epsilon_i = +1$ and a negative cycle (of size $i$) if $\epsilon_1 \epsilon_2 \cdots \epsilon_i = -1$. Given an element $w$ of $B_n$, we let $n_i(w)$ denote the number of positive cycles of $w$ of size $i$, and we let $m_i(w)$ denote the number of negative cycles of $w$ of size $i$.

The following theorem is a special case of Theorem 4.1 of Reiner \cite{Rei2}. Compare with the type A identity in Equation \eqref{eq:F1}.

\begin{thm}[Reiner] \label{vics}
We have the following series identity:
\[ \sum_{n \geq 0} \frac{u^n \sum_{w \in B_n} t^{\des_B(w)+1} \prod_i x_i^{n_i(w)} y_i^{m_i(w)}}{(1-t)^{n+1}} = 1+ \sum_{k \geq 1} t^k \frac{1}{1-x_1 u} \prod_{m > 1} \left( \frac{1+y_m u^m}
{1-x_m u^m} \right)^{N^*(2k-1,2m)}, \]
where
\[
N^*(2k-1,2m) :=
\frac{1}{2m} \sum_{d|m \atop d {\tiny \textrm{ odd}} }
\mu(d) [(2k-1)^{m/d}-1].
\]
\end{thm}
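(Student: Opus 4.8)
The plan is to prove Theorem~\ref{vics} along the lines of Gessel and Reutenauer's analysis of descents and cycle type in $S_n$ that underlies the type~A identity~\eqref{eq:F1}, carried over to the hyperoctahedral group by Reiner. Indeed, the displayed identity is the ``finitely many colors'' specialization of Reiner's symmetric-function-valued cycle index \cite[Theorem~4.1]{Rei2}, so one clean route is simply to quote that theorem and apply the appropriate specialization; but it is worth spelling out the combinatorial skeleton.

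First I would rewrite the left-hand side using type~B $P$-partition reciprocity. For $w \in B_n$ one has
\[
\frac{t^{\des_B(w)+1}}{(1-t)^{n+1}} = \sum_{k \geq 1} t^k\, c_{2k-1}(w),
\]
where $c_{2k-1}(w)$ counts the sign-symmetric colorings $f\colon [\pm n]\to\{-(k-1),\dots,0,\dots,k-1\}$ (so $f(-i)=-f(i)$ and $f(0)=0$, giving $2k-1$ colors) that are weakly increasing along $w(0),w(1),\dots,w(n)$ and strictly increasing at each type~B descent of $w$; this is the same reciprocity that underlies~\eqref{eq:seriesB}. Substituting and interchanging the order of summation turns the left-hand side of Theorem~\ref{vics} into
\[
\sum_{k\geq 1} t^k \sum_{n \geq 0} u^n \sum_{w \in B_n} c_{2k-1}(w)\,\prod_i x_i^{n_i(w)} y_i^{m_i(w)},
\]
so it suffices to identify, for each fixed $k$, the inner double sum with $\frac{1}{1-x_1u}\prod_{m>1}\bigl(\frac{1+y_mu^m}{1-x_mu^m}\bigr)^{N^*(2k-1,2m)}$.

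The combinatorial core is then a cycle-by-cycle bijection between a pair (signed permutation $w$, compatible coloring $f$) and a multiset of colored ``signed necklaces'': a positive cycle of $w$ together with the restriction of $f$ determines an ordinary cyclic word, while a negative cycle of size $i$ --- which is a single $2i$-element orbit on $[\pm n]$ --- determines a cyclic word of length $2i$ that is carried to its own sign-reversal under rotation by $i$, and the positive fixed points carry arbitrary ``central'' colors and account for the free factor $1/(1-x_1u)$. The exponential formula for multisets of such structures assembles these into a product over primitive-necklace shapes, and a M\"obius inversion over the rotations compatible with the sign-reversal evaluates the exponents: the restriction to odd divisors $d\mid m$ and the subtracted $1$ in $(2k-1)^{m/d}-1$ (which removes the colorings fixed by the negation involution) are exactly what produce $N^*(2k-1,2m)$, while the numerator $1+y_mu^m$ --- rather than $1-y_mu^m$ --- records that a negative cycle closes up with a sign flip. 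Reading off the coefficient of $t^k$ completes the argument.

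The main obstacle will be the signed bookkeeping: making the cycle-by-cycle correspondence and the exponential-formula step precise when cycles come in two flavors, and in particular handling the asymmetry between positive and negative $1$-cycles. A negative fixed point $w(j)=-j$ is already a length-$2$ orbit, and this ``doubling'' is the source of the $2m$, the exponent $m/d$, and the oddness condition in $N^*(2k-1,2m)$; getting this edge case and the sign on the numerator correct is the only real content beyond the type~A argument. As noted, all of this is carried out in Reiner \cite{Rei2}, so in practice I would cite \cite[Theorem~4.1]{Rei2} and merely verify that the identity above is the specialization of that theorem which counts compatible colorings with $2k-1$ colors, the single substantive step being the recognition of the primitive-signed-necklace count as $N^*(2k-1,2m)$.
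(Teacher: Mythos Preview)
The paper does not prove Theorem~\ref{vics}; it is stated there as a special case of Theorem~4.1 of Reiner~\cite{Rei2} and simply quoted. Your proposal already anticipates this---you say that in practice you would cite \cite[Theorem~4.1]{Rei2} and check the specialization---so on that count you match the paper exactly.

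Where you go further is in sketching the Gessel--Reutenauer-style argument that underlies Reiner's result: the type~B $P$-partition reciprocity turning $t^{\des_B(w)+1}/(1-t)^{n+1}$ into a sum over compatible $(2k-1)$-colorings, followed by a cycle-by-cycle bijection to multisets of signed necklaces and an exponential-formula assembly. This outline is sound and faithful to how the identity is actually established in~\cite{Rei2}; your remarks about the oddness restriction on divisors, the subtracted~$1$ in $N^*(2k-1,2m)$, and the numerator $1+y_m u^m$ reflecting the sign flip on negative cycles are all on target. Since the paper itself supplies no argument here, your sketch is strictly more informative than what the paper provides, and nothing in it conflicts with the source.
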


Given this theorem, we are ready to prove Theorem \ref{thm:Bmain}.

\begin{proof}[First proof of Theorem \ref{thm:Bmain}.]
It is not difficult to see that
\[
\sgn_B(w) = (-1)^{n - \sum_i n_i(w)}.
\]
Setting $u \mapsto -u, x_i \mapsto -1, y_i \mapsto 1$ in Theorem \ref{vics} gives that
\[
\sum_{n \geq 0} \frac{u^n \sum_{w \in B_n} \sgn_B(w) t^{\des_B(w)+1}}{(1-t)^{n+1}} = 1+\sum_{k \geq 1} \frac{t^k}{(1-u)} .
\]

Taking the coefficient of $u^n$, $n>0$, gives that
\[
\frac{\sum_{w \in B_n} \sgn_B(w) t^{\des_B(w)+1}}{(1-t)^{n+1}}= \sum_{k \geq 1} t^k,
\]
and multiplying by $(1-t)^{n+1}/t$ yields
\[
 \sum_{w \in B_n} \sgn_B(w) t^{\des_B(w)} = (1-t)^{n+1}\sum_{k\geq 0} t^k.
\]
The result follows by summing this identity with \eqref{eq:seriesB}:
\[ \sum_{w \in B_n} t^{\des_B(w)} = (1-t)^{n+1} \sum_{k \geq 0} (2k+1)^n t^k,\]
and noting that
\[
 2B_n^{\pm}(t) = \left( \sum_{w \in B_n} t^{\des_B(w)} \right) \pm \left(\sum_{w \in B_n} \sgn_B(w) t^{\des_B(w)} \right).
\]
\end{proof}

For the second proof of Theorem \ref{thm:Bmain}, we first step back and define the polynomial
\[
 B_n(q,t) = \sum_{w \in B_n} q^{\inv_B(w)} t^{\des_B(w)},
\]
where $\inv_B(w)$ is the number of type B inversions, defined as follows:
\begin{align*}
 \inv_B(w) &= |\{ 1\leq i < j \leq n : w(i) > w(j)\}| + |\{ 1\leq i < j \leq n : -w(i) > w(j)\}| + \\
  &\quad |\{ 1 \leq i \leq n : w(i) < 0\}|.
\end{align*}
Both $\inv(w)$ and $\inv_B(w)$ are combinatorial characterizations of the Coxeter length of an element. See \cite{BjBr} for more background.

Using the framework of Chapter 7 of \cite{BjBr} and work of Tan \cite[Theorem C]{TanLin} for type B, it follows that at $q=-1$ we have
\begin{equation}\label{eq:Bneasy}
 B_n(-1,t) = (1-t)^n.
\end{equation}
This equation is also in Reiner's paper \cite{Rei}, where the identity is given a combinatorial explanation with a sign-reversing involution on $B_n$.

\begin{proof}[Second proof of Theorem \ref{thm:Bmain}]
The theorem follows quickly from identity \eqref{eq:Bneasy}. Since
\[
B_n^+(t) - B_n^-(t) =B_n(-1,t) = (1-t)^n,
\]
and
\[
B_n^+(t) + B_n^-(t) = B_n(t),
\]
we find
\begin{equation}\label{eq:moments}
2B_n^{\pm}(t) = B_n(t) \pm (1-t)^n.
\end{equation}
Dividing both sides by $2(1-t)^{n+1}$ and using the series identity for $B_n(t)$ in \eqref{eq:seriesB}, we get
\[
\frac{B_n^{\pm}(t)}{(1-t)^{n+1}} = \frac{B_n(t) \pm (1-t)^n}{2(1-t)^{n+1}} = \sum_{k\geq 0} \frac{ (2k+1)^n \pm 1}{2} t^k,
\]
as desired.
\end{proof}

\subsection{Type B symmetries and recurrences}

As we did with the symmetric group, we now consider homogenized polynomials
\[
 B_n(s,t) = s^nB_n(t/s) = \sum_{w \in B_n} s^{\asc_B(w)} t^{\des_B(w)},
\]
where $\asc_B(w) = n- \des_B(w)$ counts the number of type B ascents, and likewise for the polynomials $B_n^{\pm}(s,t)$.

In the hyperoctahedral group, a nice involution comes from negating all entries, i.e., $w \to \overline{w}$, where $\overline{w}(i) = -w(i)$. This involution clearly satisfies $w(i)>w(i+1)$ if and only if $\overline{w}(i) < \overline{w}(i+1)$, $i=0,1,\ldots,n$, so
\[
 B_n(s,t) = B_n(t,s).
\]

From Equation (\ref{eq:sgnB}), we see that for any $w = u^J$,
\[
\sgn_B(\overline{w})\cdot \sgn_B(w) = (-1)^{n-|J|}\sgn(u)\cdot (-1)^{|J|}\sgn(u) = (-1)^n,
\]
and therefore,
\[
 \sgn_B(\overline{w}) = \begin{cases} \sgn_B(w) & \mbox{ if $n$ even,}\\ -\sgn_B(w) & \mbox{ if $n$ odd.} \end{cases}
\]
We get the following symmetries as a consequence.

\begin{prop}[Type B symmetries]
For any $n\geq 1$:
\[
 B_n^{\pm}(s,t) = \begin{cases} B_n^{\pm}(t,s) & \mbox{ if $n$ even,}\\ B_n^{\mp}(t,s) & \mbox{ if $n$ odd.} \end{cases}
\]
\end{prop}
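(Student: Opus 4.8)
The plan is to mimic, essentially verbatim, the proof of the type A symmetries, now using the negation involution $w \mapsto \overline{w}$ on $B_n$ (rather than the reversal involution used in type A). Two ingredients have already been assembled in the paragraphs immediately preceding the statement: first, $\overline{\cdot}$ swaps type B ascents and descents position by position, so that $\des_B(\overline{w}) = \asc_B(w) = n - \des_B(w)$; second, from \eqref{eq:sgnB}, one has $\sgn_B(\overline{w}) = \sgn_B(w)$ when $n$ is even and $\sgn_B(\overline{w}) = -\sgn_B(w)$ when $n$ is odd. Since $\overline{\overline{w}} = w$, the map is an involution of $B_n$.

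The first step is to record what $\overline{\cdot}$ does to the sets $B_n^+$ and $B_n^-$: when $n$ is even it restricts to an involution of $B_n^+$ and, separately, of $B_n^-$; when $n$ is odd it restricts to a bijection $B_n^+ \leftrightarrow B_n^-$. The second step is to substitute $w = \overline{w'}$ in the defining sum for $B_n^{\pm}(s,t) = \sum_{w \in B_n^{\pm}} s^{\asc_B(w)} t^{\des_B(w)}$: each monomial $s^{\asc_B(w)} t^{\des_B(w)}$ becomes $s^{\des_B(w')} t^{\asc_B(w')}$, i.e.\ the same monomial with $s$ and $t$ interchanged, while $w'$ ranges over $\overline{B_n^{\pm}}$. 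By the first step, $\overline{B_n^{\pm}} = B_n^{\pm}$ for $n$ even and $\overline{B_n^{\pm}} = B_n^{\mp}$ for $n$ odd, which gives $B_n^{\pm}(s,t) = B_n^{\pm}(t,s)$ and $B_n^{\pm}(s,t) = B_n^{\mp}(t,s)$ respectively, as claimed.

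I expect no real obstacle here: unlike the type B recurrences, this is a single-substitution argument. The only point to keep straight is the bookkeeping that there are exactly $n$ descent positions $i = 0, 1, \ldots, n-1$, so $\asc_B(w) + \des_B(w) = n$ and the homogenization $B_n^{\pm}(s,t)$ is genuinely homogeneous of degree $n$ — this is precisely what makes applying $\overline{\cdot}$ correspond to the clean $s \leftrightarrow t$ swap.
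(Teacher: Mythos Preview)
Your proposal is correct and follows exactly the paper's approach: the paper establishes, in the paragraphs immediately preceding the proposition, that the negation involution $w\mapsto\overline{w}$ swaps ascents and descents and satisfies $\sgn_B(\overline{w})=(-1)^n\sgn_B(w)$, then simply states the symmetries ``as a consequence.'' Your write-up spells out that consequence via the obvious substitution, which is precisely what the paper intends.
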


Though usually referenced in an equivalent form for the univariate polynomials (e.g., in \cite[Theorem 3.4(i)]{Brenti}), these polynomials satisfy
\[
 B_{n+1}(s,t) = U[B_n(s,t)],
\]
where $U$ is the linear operator
\[
 U = s+t +2st\left( \frac{d}{ds} + \frac{d}{dt} \right) = T+st\left( \frac{d}{ds} + \frac{d}{dt} \right),
\]
and $T$ is the operator from Section \ref{sec:symmetry}. To prove the recurrence, the idea is to imagine the effect of inserting the letter $(n+1)$ or $\overline{(n+1)}$ into an element of $B_n$ written in one-line notation.

This logic extends to the polynomials $B_n^{\pm}(s,t)$, though there are more delicate cases to consider. Details can be found in \cite[Section 7]{DS}.

\begin{prop}[Type B recurrences]\label{typeBrec}
For any $n\geq 1$,
\begin{align*}
 B_{n+1}^{\pm}(s,t) &= sB_n^{\pm}(s,t) + tB_n^{\mp}(s,t) + st\left( \frac{d}{ds} + \frac{d}{dt} \right)B_n(s,t),\\
  &= T_s B_n^{\pm}(s,t) + T_tB_n^{\mp}(s,t),
\end{align*}
where operators $T_s$ and $T_t$ are as in Proposition \ref{prp:eulpmrec}.
\end{prop}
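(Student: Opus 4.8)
The plan is to deduce both equalities algebraically from two facts already recorded above: the sign identity \eqref{eq:moments}, namely $2B_n^{\pm}(t) = B_n(t) \pm (1-t)^n$, and the operator recurrence $B_{n+1}(s,t) = U[B_n(s,t)]$ with $U = s+t+2st\left(\frac{d}{ds}+\frac{d}{dt}\right)$. First I would homogenize \eqref{eq:moments}: since $B_n^{\pm}(s,t)$, $B_n(s,t)$ and $(s-t)^n$ are each homogeneous of degree $n$ and specialize at $s=1$ to $B_n^{\pm}(t)$, $B_n(t)$ and $(1-t)^n$ respectively, \eqref{eq:moments} is equivalent to
\[
 B_n^{\pm}(s,t) = \frac{1}{2}\bigl(B_n(s,t) \pm (s-t)^n\bigr), \qquad n\geq 1,
\]
and in particular $B_n^{\pm}(s,t) + B_n^{\mp}(s,t) = B_n(s,t)$.

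Next I would substitute these expressions into the right-hand side of the claimed recurrence and simplify, using $B_{n+1}(s,t)=U[B_n(s,t)]$:
\begin{align*}
 sB_n^{\pm} + tB_n^{\mp} + st\left(\frac{d}{ds}+\frac{d}{dt}\right)B_n
 &= \frac{s}{2}\bigl(B_n \pm (s-t)^n\bigr) + \frac{t}{2}\bigl(B_n \mp (s-t)^n\bigr) + st\left(\frac{d}{ds}+\frac{d}{dt}\right)B_n \\
 &= \frac{1}{2}\left[(s+t)B_n + 2st\left(\frac{d}{ds}+\frac{d}{dt}\right)B_n\right] \pm \frac{1}{2}(s-t)^{n+1} \\
 &= \frac{1}{2}\,U[B_n(s,t)] \pm \frac{1}{2}(s-t)^{n+1} \\
 &= \frac{1}{2}\bigl(B_{n+1}(s,t) \pm (s-t)^{n+1}\bigr) = B_{n+1}^{\pm}(s,t),
\end{align*}
where the last equality is the degree-$(n+1)$ instance of the homogeneous identity above. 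This proves the first equality of the proposition; the second is a formal rewrite, obtained by distributing $st\left(\frac{d}{ds}+\frac{d}{dt}\right)B_n = st\left(\frac{d}{ds}+\frac{d}{dt}\right)(B_n^{\pm}+B_n^{\mp})$ over $B_n^{\pm}$ and $B_n^{\mp}$ and reassembling the operators $T_s$ and $T_t$.

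This route is short because it pushes all of the combinatorics into \eqref{eq:moments}. The natural alternative — the one the text gestures at, carried out in detail in \cite[Section~7]{DS} — is a direct insertion argument: every element of $B_{n+1}$ arises uniquely from some $w\in B_n$ by inserting $(n+1)$ or $\overline{(n+1)}$ into one of the $n+1$ gaps in the one-line notation of $w$. The effect on $\des_B$ is easy to describe (both new letters leave the descent number fixed when inserted at a descent of $w$ and raise it by one at an ascent, while an insertion at the right end leaves it fixed for $(n+1)$ and raises it for $\overline{(n+1)}$), but the sign is the delicate point: inserting $(n+1)$, respectively $\overline{(n+1)}$, into gap $g$ multiplies $\sgn_B$ by $(-1)^{n-g}$, respectively $(-1)^{n-g+1}$. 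The main obstacle along this route is to verify that across the $n$ interior gaps these parity factors make the positively- and negatively-signed images pair off in a way that does not depend on $\sgn_B(w)$ — so that their contributions reassemble into the sign-blind term $st\left(\frac{d}{ds}+\frac{d}{dt}\right)B_n$ — while the single end gap is exactly what produces the sign-mixing terms $sB_n^{\pm}+tB_n^{\mp}$. The algebraic argument sketched first sidesteps this bookkeeping.
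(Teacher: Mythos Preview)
Your algebraic derivation of the first equality is correct and is a genuinely different route from the paper's: the paper does not prove this proposition at all but defers to the combinatorial insertion argument in \cite[Section~7]{DS}. Your method, pushing the work into the homogenized identity $B_n^{\pm}(s,t)=\tfrac12(B_n(s,t)\pm(s-t)^n)$ and the recurrence $B_{n+1}=U[B_n]$, is shorter and avoids the case analysis entirely.

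There is, however, a real gap in your treatment of the second displayed equality. The rewrite you describe actually produces
\[
sB_n^{\pm} + tB_n^{\mp} + st\Bigl(\tfrac{d}{ds}+\tfrac{d}{dt}\Bigr)\bigl(B_n^{\pm}+B_n^{\mp}\bigr)
= \Bigl(s + st\bigl(\tfrac{d}{ds}+\tfrac{d}{dt}\bigr)\Bigr)B_n^{\pm}
 + \Bigl(t + st\bigl(\tfrac{d}{ds}+\tfrac{d}{dt}\bigr)\Bigr)B_n^{\mp},
\]
and these operators are \emph{not} the paper's $T_s$ and $T_t$, which carry a factor $\tfrac12$ on the derivative part. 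A direct check at $n=1$ already fails: $T_sB_1^{+}+T_tB_1^{-}=T_s(s)+T_t(t)=s^2+st+t^2$, while $B_2^{+}(s,t)=s^2+2st+t^2$. So the two lines of the proposition are not equal to each other as written; the natural type~$B$ operators here are $s+st(\tfrac{d}{ds}+\tfrac{d}{dt})$ and $t+st(\tfrac{d}{ds}+\tfrac{d}{dt})$, built from $U$ rather than $T$. Either the statement contains a typo, or you have proved only the first line --- in any case, the phrase ``formal rewrite'' does not cover it, and you should flag the discrepancy rather than paper over it.
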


The recurrence here is similar (and simpler) than that for type A in Proposition \ref{prp:eulpmrec}. We conjecture that the polynomials $B_n^{\pm}(t)$ are real-rooted, for reasons similar to those discussed in Section \ref{sec:roots}. We have verified this conjecture for $n\leq 30$.

\subsection{Central limit theorems} \label{BnCLT}

The main purpose of this subsection is to give two proofs of the following theorem.

\begin{thm} \label{typeBclt} (Limiting distribution, type $B$) The distribution of the coefficients of $B_n^{\pm}(t)$ is asymptotically normal as $n \rightarrow \infty$. For $n \geq 2$, these numbers have mean $n/2$ and for $n \geq 3$, these numbers have variance $(n+1)/12$. \end{thm}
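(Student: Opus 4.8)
The plan is to mirror the two proofs already given for Theorem~\ref{thm:clt} in the symmetric group case, since Theorem~\ref{thm:Bmain} gives us the exact type~B analogue of Theorem~\ref{thm:main} and Equation~\eqref{eq:moments} gives us the type~B analogue of the decomposition $2A_n^\pm(t) = A_n(t) \pm (1-t)^{\lfloor n/2 \rfloor}A_{\lceil n/2 \rceil}(t)$. The cleanest route is the method of moments. First I would establish the type~B analogue of Proposition~\ref{prop:mommatch}: from $2B_n^{\pm}(t) = B_n(t) \pm (1-t)^n$, the correction term $(1-t)^n$ and all of its derivatives up to order $n-1$ vanish at $t=1$, so for every fixed positive integer $r$, once $n > r$ the $r$th falling moment (hence the $r$th moment) of the \pmE{} distribution on $B_n$ agrees with that of the ordinary type~B Eulerian distribution. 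Then I would invoke the known fact that the type~B Eulerian numbers $\el{B_n}{k}$ are asymptotically normal with mean $n/2$ and variance $(n+1)/12$ — this follows, e.g., from the real-rootedness of $B_n(t)$ (Frobenius/Brenti) together with Harper's method, or from the series identity \eqref{eq:seriesB} via Bender's theorem. Taking $r=1$ and $r=2$ in the moment-matching statement pins down the mean as $n/2$ for $n\geq 2$ and the variance as $(n+1)/12$ for $n\geq 3$, and convergence of all moments to those of the Gaussian yields the central limit theorem.

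For the second proof I would run the analytic-combinatorics argument from the first proof of Theorem~\ref{thm:clt} essentially verbatim, starting from the moment generating function
\[
\mathbb{E}\bigl[\exp\{s\,W_n^{\pm}\}\bigr] = \frac{B_n^{\pm}(e^s)}{2^n n!} = \frac{(1-e^s)^{n+1}}{2^n n!}\sum_{k\geq 0}\frac{(2k+1)^n \pm 1}{2}\,e^{ks},
\]
where $W_n^{\pm}$ is the number of descents of a uniformly random element of $B_n^{\pm}$. After centering by $n/2$ and scaling by $\sqrt{n+1}$, the prefactor analysis using $\frac{1-e^{-x}}{x} = \exp\{-\tfrac{x}{2} + \tfrac{x^2}{24} + \mathcal{O}(x^3)\}$ goes through identically (the factor $(2k+1)^n$ versus $k^n$ only shifts the integral comparison by a bounded amount, and the $\pm 1$ correction contributes a term that is crushed by the $1/n!$ normalization), and one again gets convergence of the Laplace transform to $\exp\{s^2/24\}$ on $I = (0,\infty)$, whence Lemma~\ref{lem:KL} finishes the argument.

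The main obstacle is not conceptual but bookkeeping: one must confirm that the type~B Eulerian distribution itself satisfies the claimed CLT with exactly the stated mean and variance, and track the small-$n$ thresholds ($n\geq 2$ for the mean, $n\geq 3$ for the variance) carefully, since these come from when the moment-matching of Proposition-analogue kicks in relative to the finitely many low-order corrections. For the analytic proof, the one point requiring a little care is that the sum $\sum_{k\geq 0}(2k+1)^n t^k$ telescopes against $\sum_{k\geq 0}t^k$ correctly under the same integral-comparison bounds $t\int_0^\infty x^n t^x\,dx \leq \sum a^n t^a \leq \frac{1}{t}\int_0^\infty x^n t^x\,dx$ used in the type~A proof, with $t = e^{-s/\sqrt{n+1}}$, so that both $t$ and $1/t$ are $1 + \mathcal{O}(n^{-1/2})$; this is routine. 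I would present the method-of-moments proof first (as it is shortest and parallels the structure of Section~\ref{sec:clt}) and then state that the analytic proof is obtained by the obvious modifications of the first proof of Theorem~\ref{thm:clt}.
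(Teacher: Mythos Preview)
Your proposal is correct and matches the paper's two proofs essentially step for step: first the method-of-moments argument via the type~$B$ analogue of Proposition~\ref{prop:mommatch} (from $2B_n^{\pm}(t)=B_n(t)\pm(1-t)^n$, the $(1-t)^n$ term kills all falling moments of order $r<n$) combined with the known type~$B$ Eulerian CLT, and second the Laplace-transform computation from Theorem~\ref{thm:Bmain} paralleling the first proof of Theorem~\ref{thm:clt}. One minor slip to fix when you write it up: $|B_n^{\pm}|=2^{n-1}n!$, not $2^n n!$, so your displayed normalization is off by a factor of $2$; the paper handles this by rewriting the summand as $(k+\tfrac{1}{2})^n+2^{-n}$ over a denominator of $n!$.
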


The first proof of Theorem \ref{typeBclt} uses the following proposition.

\begin{prop} \label{useprop} Let $r$ be any positive integer. Then for $n>r$, the rth moment of the type $B_n$ $\pm$-Eulerian distribution equals the rth moment of the type $B_n$ Eulerian distribution.
\end{prop}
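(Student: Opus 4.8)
The plan is to mirror the proof of Proposition \ref{prop:mommatch} essentially verbatim, replacing the type A ingredients with their type B counterparts. As in that proof, I would work with the $r$th falling moment rather than the $r$th moment, since the two encode the same information (the falling moments are integer linear combinations of the ordinary moments with the leading coefficient $1$, so matching all falling moments up to order $r$ is equivalent to matching all ordinary moments up to order $r$), and the falling moment is easy to extract from a generating function: the $r$th falling moment of the distribution with generating polynomial $f(t)$ is $f^{(r)}(1)/f(1)$.

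The key identity is Equation \eqref{eq:moments}, which I may assume since it appears earlier in the excerpt:
\[
2B_n^{\pm}(t) = B_n(t) \pm (1-t)^n.
\]
First I would differentiate both sides $r$ times with respect to $t$ and set $t = 1$. Since $r < n$, differentiating $(1-t)^n$ fewer than $n$ times and then evaluating at $t=1$ leaves a positive power of $(1-t)$ as a factor, hence gives $0$. Therefore $\left. \frac{d^r}{dt^r} B_n^{\pm}(t)\right|_{t=1} = \tfrac12 \left. \frac{d^r}{dt^r} B_n(t)\right|_{t=1}$. Next I would observe that $B_n^{\pm}(1) = |B_n^{\pm}| = 2^{n-1} n!$ and $B_n(1) = 2^n n!$, so $B_n^{\pm}(1) = \tfrac12 B_n(1)$; dividing the derivative identity by this shows that the $r$th falling moment of the type $B_n$ $\pm$-Eulerian distribution equals the $r$th falling moment of the type $B_n$ Eulerian distribution. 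Since matching falling moments up to order $r$ is equivalent to matching ordinary moments up to order $r$, this proves the proposition.

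I do not anticipate a genuine obstacle here; the argument is routine once Equation \eqref{eq:moments} is in hand. The only point requiring a line of care is the equivalence between falling moments and ordinary moments — one should note that a random variable's first $r$ ordinary moments are determined by (and determine) its first $r$ falling moments, since $x^r$ and $x(x-1)\cdots(x-r+1)$ span the same space of polynomials of degree $\le r$ — and the bookkeeping that $B_n^\pm(1) = \tfrac12 B_n(1)$, which is immediate from $|B_n^+| = |B_n^-| = \tfrac12|B_n|$ (the sign character is a surjective homomorphism onto $\{\pm 1\}$). After establishing this proposition, Theorem \ref{typeBclt} follows, exactly as in the type A case, by combining it with the known asymptotic normality of the type B Eulerian distribution (mean $n/2$, variance $(n+1)/12$) and the method of moments.
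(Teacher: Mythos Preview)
Your proposal is correct and follows essentially the same approach as the paper: use Equation \eqref{eq:moments} to write $B_n^{\pm}(t) = \tfrac{1}{2}B_n(t) \pm \tfrac{1}{2}(1-t)^n$, differentiate $r$ times, set $t=1$, and observe that the $(1-t)^n$ term vanishes when $n>r$. You are somewhat more explicit than the paper about the normalization $B_n^{\pm}(1) = \tfrac{1}{2}B_n(1)$ and the equivalence of falling and ordinary moments, but the argument is the same.
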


\begin{proof} Instead of working with the $r$th moment, we can work with the $r$th falling moment, which can be computed from the relevant generating function by differentiating (with respect to $t$) $r$ times and then setting $t=1$.

From equation (\ref{eq:moments}), we conclude that
\[ B_n^{\pm}(t) = \frac{1}{2} B_n(t) \pm \frac{1}{2} (1-t)^n.\] Now observe that if $n>r$, then differentiating $(1-t)^n$ $r$ times and setting $t=1$ gives $0$.
\end{proof}

We now give our first proof of Theorem \ref{typeBclt}.

\begin{proof} This is immediate from the method of moments, Proposition \ref{useprop}, and the fact (see Theorem 3.4 of \cite{CM} which uses a real-rootedness argument) that the type $B$ Eulerian distribution is asymptotically normal with the claimed mean and variance. \end{proof}

Next we use generating functions to give a second proof of Theorem \ref{typeBclt}.

\begin{proof} The computation is very similar to the first proof of Theorem \ref{thm:clt}. Again, we will establish the theorem only for the coefficients of $ B_{n}^{+}(t) $, since the proof of the case $ B_{n}^{-}(t) $ is analogous. Let $ W_{n}^{+} $ be a random variable taking values in $ \{0,\dots,n\} $ such that
	\begin{equation*}
		\mathbb{P}(W_{n}^{+}=k) = \frac{1}{\left|B_{n}^{+}\right|} \el{B_n}{k}^+.
	\end{equation*}
	Similarly as before, we normalize $ W_{n}^{+} $ by
	\begin{equation*}
		Z_{n}^{+} = \frac{1}{\sqrt{n+1}} \left( W_{n}^{+} - \frac{n}{2} \right).
	\end{equation*}
	Then $ \left|B_{n}^{+}\right| = \frac{1}{2}\left|B_{n}\right| = 2^{n-1}n! $, and so, Theorem \ref{thm:Bmain} tells that the Laplace transform of $ Z_{n}^{+} $ is given by
	\begin{equation*}
		\mathbb{E}[e^{-s Z^{+}_{n}}]
		= \frac{e^{\frac{sn}{2\sqrt{n+1}}} (1-e^{-s/\sqrt{n+1}})^{n+1}}{n!} \sum_{k\geq 0} \left( \left(k+\frac{1}{2}\right)^n + \frac{1}{2^{n}} \right) e^{-ks/\sqrt{n+1}}.
	\end{equation*}
	As before, the prefactor is asymptotically $ e^{\frac{1}{24}s^{2} + \mathcal{O}(n^{-1/2})} \frac{1}{n!} (s/\sqrt{n+1})^{n+1} $ as $ n\to\infty $. Moreover, for each $ t \in (0, 1) $ and $ x \geq 0 $, we have
	\begin{equation*}
		 \int_{k}^{k+1} x^{n} t^x \, dx
		 \leq \left(k+\frac{1}{2}\right)^{n} t^{n}
		 \leq t^{-2} \int_{k+1}^{k+2} x^{n} t^{x} \, dx,
	\end{equation*}
	for all $ k \geq 0 $. This shows that
	\begin{equation*}
		\int_{0}^{\infty} x^{n} e^{-xs/\sqrt{n+1}} \, dx
		\leq \sum_{k\geq 0}\left(k+\frac{1}{2}\right)^n e^{-ks/\sqrt{n+1}}
		\leq e^{2s/\sqrt{n+1}} \int_{0}^{\infty} x^{n} e^{-xs/\sqrt{n+1}} \, dx,
	\end{equation*}
	and so,
	\begin{align*}
		\sum_{k\geq 0}\left(k+\frac{1}{2}\right)^n e^{-ks/\sqrt{n+1}}
		&= e^{\mathcal{O}(n^{-1/2})} \int_{0}^{\infty} x^{n} e^{-xs/\sqrt{n+1}} \, dx \\
		&= e^{\mathcal{O}(n^{-1/2})} n! \left(\frac{\sqrt{n+1}}{s}\right)^{n+1}.
	\end{align*}
	On the other hand,
	\begin{equation*}
		\sum_{k\geq 0} \frac{1}{2^{n}} e^{-ks/\sqrt{n+1}}
		= \frac{1}{1 - \frac{1}{2}e^{-s/\sqrt{n+1}}}
		\leq 2,
	\end{equation*}
	and so, the contribution from this sum is $ \mathcal{O}(1) $. Combining altogether,
	\begin{equation*}
		\mathbb{E}[e^{-s Z^{+}_{n}}]
		= e^{\frac{1}{24}s^{2} + \mathcal{O}(n^{-1/2})} \left[ e^{\mathcal{O}(n^{-1/2})} + \mathcal{O} \left( \frac{(s/\sqrt{n+1})^{n+1}}{n!} \right) \right]
		= e^{\frac{1}{24}s^{2} + \mathcal{O}(n^{-1/2})}.
	\end{equation*}
	Therefore, the convergence $ \mathbb{E}[e^{-s Z^{+}_{n}}] \to e^{\frac{1}{24}s^{2}} $ holds for all $ s > 0 $ and we are done.
\end{proof}

\subsection{Type $B$ riffle shuffling and sign}

There is a notion of type $B$ riffle shuffling, studied by Bergeron and Bergeron \cite{BB}, and also in \cite{F2}. For $a$ odd (the case of interest to us), a type $B$ $a$-riffle shuffle can be defined as follows. Choose integers $j_1,\ldots,j_a$ according to the multinomial distribution \[ P(j_1,\cdots,j_a) = \binom{n}{ j_1, \ldots, j_a} / a^n, \] where $0 \leq j_i \leq n$, $\sum_{i=1}^a j_i=n$.

Given the $j_i$, cut off the top $j_1$ cards, the next $j_2$ cards and so on,
producing $a$ packets (some possibly empty). Turn the even numbered packets face up. Then drop cards one at a time, according to the rule that if there are $A_j$ cards in packet $j$, the next card is dropped from packet $i$ with probability $A_i/(A_1+\cdots+A_a)$. This is done until all cards have been dropped.

In what follows, let $P^B_{n,a}(w)$ denote the probability of a signed permutation $w$ after a type $B$ $a$-shuffle started at the identity. From \cite{BB},
\begin{equation} \label{BBform}
P^B_{n,a}(w) = {n + (a-1)/2 - \des_B(w^{-1}) \choose n} / a^n.
\end{equation}
As with the type A riffle shuffle, an $r$ iterations of an $a$-shuffle gives the same distribution as one iteration of an $a^r$ shuffle.

We give a formula for the probability of a having a type $B$ permutation with sign $1$ after $r$ iterations of $a$-shuffling, starting from the identity permutation. Let $P_{n,a^r}^{B,+}$ denote this probability.

\begin{thm} For any $n$ and $a$ odd, the probability that $r$ iterations of a type B $a$-shuffle yields an element of $B_n$ with sign 1 is:
\[ P_{n,a^r}^{B,+} = \frac{1}{2} + \frac{1}{2 a^{rn}}.\]
\end{thm}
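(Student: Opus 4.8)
The plan is to mirror the second proof of Theorem \ref{thm:shuffle}, replacing the type A identity of Theorem \ref{thm:main} by its type B counterpart, Theorem \ref{thm:Bmain}. Fix an odd integer $a \geq 1$ and write $k = (a-1)/2$, so that $k$ runs over all nonnegative integers as $a$ runs over the positive odd integers. Starting from Equation \eqref{eq:seriesBpm} with the $+$ sign,
\[
\frac{1}{(1-t)^{n+1}} \sum_{w \in B_n^+} t^{\des_B(w)} = \sum_{k \geq 0} \frac{(2k+1)^n + 1}{2}\, t^k,
\]
I would expand $(1-t)^{-(n+1)} = \sum_{m \geq 0} \binom{n+m}{n} t^m$ and extract the coefficient of $t^k$ on both sides, which gives
\[
\sum_{w \in B_n^+} \binom{n + k - \des_B(w)}{n} = \frac{a^n + 1}{2}.
\]

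The next step is to pass from $\des_B(w)$ to $\des_B(w^{-1})$ so as to match the Bergeron--Bergeron formula \eqref{BBform}. Since $\sgn_B$ is the sign character $(-1)^{\ell(w)}$ for the Coxeter length $\ell$, and $\ell(w) = \ell(w^{-1})$, inversion $w \mapsto w^{-1}$ is a bijection of $B_n^+$ onto itself; relabeling the sum therefore yields
\[
\sum_{w \in B_n^+} \binom{n + (a-1)/2 - \des_B(w^{-1})}{n} = \frac{a^n + 1}{2}.
\]
Dividing by $a^n$ and using \eqref{BBform} gives
\[
P^{B,+}_{n,a} = \sum_{w \in B_n^+} P^B_{n,a}(w) = \frac{1}{a^n}\cdot\frac{a^n+1}{2} = \frac{1}{2} + \frac{1}{2a^n}.
\]
Finally, since $r$ iterations of an $a$-shuffle is one $a^r$-shuffle and $a^r$ is again odd, substituting $a^r$ for $a$ yields $P^{B,+}_{n,a^r} = \tfrac12 + \tfrac{1}{2a^{rn}}$, as claimed.

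I do not expect a serious obstacle here: the only point needing a word of justification is the identity $\sgn_B(w) = \sgn_B(w^{-1})$, and the rest is routine coefficient extraction plus the standard composition property of $a$-shuffles. An alternative, paralleling the \emph{first} proof of Theorem \ref{thm:shuffle}, would begin from a Diaconis--McGrath--Pitman--type cycle-index identity for type B $a$-shuffles together with Reiner's Theorem \ref{vics}, specializing $x_i \mapsto -1$, $y_i \mapsto 1$, $u \mapsto -u$ and using the same simplification that appears in the first proof of Theorem \ref{thm:Bmain} to read off $P^{B,+}_{n,a} - P^{B,-}_{n,a}$ directly as a coefficient of $u^n$; combined with $P^{B,+}_{n,a} + P^{B,-}_{n,a} = 1$ this would give the result a second way. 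The computation via Theorem \ref{thm:Bmain} above is the cleaner of the two, so that is the route I would write up.
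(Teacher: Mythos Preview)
Your proof is correct and follows essentially the same route as the paper's: start from \eqref{eq:seriesBpm}, extract the coefficient of $t^k$ with $a=2k+1$, pass to $\des_B(w^{-1})$ using $\sgn_B(w)=\sgn_B(w^{-1})$, divide by $a^n$, and invoke \eqref{BBform} together with the composition property of $a$-shuffles. Your write-up is slightly more explicit about the coefficient extraction and about why inversion preserves $B_n^+$, but the argument is the same.
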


\begin{proof}
Equation (\ref{eq:seriesBpm}) gives
\[
\frac{\sum_{w \in B_n^+} t^{\des_B(w)}}{(1-t)^{n+1}} = \sum_{k \geq 0} \frac{(2k+1)^n+1}{2} t^k.
\]
Taking the coefficient of $t^k$ on both sides gives
\[
\sum_{w \in B_n^+} {n + k - \des_B(w) \choose n} = \frac{(2k+1)^n+1}{2}.
\]
Letting $a=2k+1$, we have
\[
\sum_{w \in B_n^+} {n + (a-1)/2 - \des_B(w) \choose n} = \frac{a^n+1}{2}.
\]
Dividing by $a^n$ and using the fact that the sign of $w$ is equal to the sign of $w^{-1}$ gives
\[
\sum_{w \in B_n^+} {n + (a-1)/2 - \des_B(w^{-1}) \choose n}/a^n = \frac{1}{2}
+ \frac{1}{2 a^n} .
\]
Since $r$ $a$-shuffles is the same as a single $a^r$ shuffle,
the result follows from equation \eqref{BBform}.
\end{proof}

As in the symmetric group case, this means that for large decks of cards, the sign is close to random after a single shuffle.

\subsection{Other one-dimensional characters of $B_n$}

Aside from the trivial and sign character, there are two other one-dimensional characters of $B_n$, and we can easily consider their interaction with descents, following Reiner \cite{Rei}.

Recall from the definition of $\sgn_B$ in \eqref{eq:sgnB} that we can express a signed permutation $w = u^J$ in terms of a permutation $u\in S_n$ such that $|w(i)|=u(i)$ and $J = \{ j : w(j) < 0\}$. We define the characters
\[
 \delta(w) = (-1)^J
\]
and
\[
 \eta(w) = \sgn(u),
\]
and note that from \eqref{eq:sgnB} we have $\sgn_B(w) = \delta(w)\cdot \eta(w)$.
In terms of signed cycles,
\[
\delta(w) = (-1)^{\sum_i m_i(w)},
\]
and
\[
\eta(w) = (-1)^{n - \sum_i n_i(w) - \sum_i m_i(w)},
\]
where we recall that $n_i(w)$ is the number of positive $i$-cycles of $w$ and $m_i(w)$ is the number of negative $i$-cycles of $w$.

Setting $x_i=1, y_i=-1$ in Theorem \ref{vics} gives that
 \[
 \sum_{n \geq 0} \frac{u^n \sum_{w \in B_n} \delta(w) t^{\des_B(w)+1}}{(1-t)^{n+1}} =1+\sum_{k \geq 1} \frac{t^k}{(1-u)} .
 \]
This identity is exactly the same as the one obtained in the proof of Theorem \ref{thm:Bmain} for the joint distribution of $\sgn_B$ with descents, so all our results for $\sgn_B$ carry over to $\delta$ immediately.

To analyze $\eta$, we set $u \mapsto -u, x_i \mapsto -1, y_i \mapsto -1$ in Theorem \ref{vics}, to find
\[
\sum_{n \geq 0} \frac{u^n \sum_{w \in B_n} \eta(w) t^{\des_B(w)+1}}{(1-t)^{n+1}} = 1+ \sum_{k \geq 1} \frac{t^k}{(1-u)} \prod_{m \geq 1} \left(
\frac{1-(-u)^m}{1+(-u)^m} \right)^{N^*(2k-1,2m)}.
\]

From Lemma 1.3.17 of \cite{FNP},
\[
\prod_{m \geq 1} \left(\frac{1-(-u)^m}{1+(-u)^m} \right)^{N^*(2k-1,2m)}
= \frac{1+(2k-1)u}{1+u} .
\]
It follows that
\[
\sum_{n \geq 0} \frac{u^n \sum_{w \in B_n} \eta(w) t^{\des_B(w)+1}}{(1-t)^{n+1}} =1+ \sum_{k \geq 1} t^k \frac{1 + (2k-1)u}{1-u^2}.
\]
Taking the coefficient of $u^n$, $n>0$, it follows that if $n$ is even, then
\[
\sum_{w \in B_n} \eta(w) t^{\des_B(w)} = (1-t)^{n+1} \sum_{k \geq 0} t^k,
\]
exactly as for $\sgn_B$ and for $\delta$.

But if $n$ is odd, something new happens. We get that
\[
\sum_{w \in B_n} \eta(w) t^{\des_B(w)} = (1-t)^{n+1}\sum_{k \geq 0} (2k+1)t^k.
\]
Since
\[
\sum_{w \in B_n} t^{\des_B(w)} = (1-t)^{n+1} \sum_{k \geq 0} (2k+1)^n t^k ,
\]
it follows that
\[
\frac{\sum_{w \in B_n \atop \eta(w)=1} t^{\des_B(w)}}{(1-t)^{n+1}} = \sum_{k \geq 0}  \frac{(2k+1)^n + (2k+1)}{2}t^k.
\]

So all of our main results have analogs for $\eta$, and we leave the details to the interested reader.

We mention that these identities can also be deduced from the following identities, also due to Reiner \cite[Theorems 3.2 and 3.3]{Rei} and proved combinatorially:
\[
 \sum_{w \in B_n} \sgn_B(w) t^{\des_B(w)} = \sum_{w \in B_n} \delta(w) t^{\des_B(w)} = (1-t)^n,
 \]
\[
 \sum_{w \in B_n} \eta(w) t^{\des_B(w)} = \begin{cases} (1-t)^n & \mbox{for $n$ even,} \\
   (1+t)(1-t)^{n-1} & \mbox{for $n$ odd}.
   \end{cases}
\]

\end{document}